\newtheorem{thm}{Theorem}[section]
\newtheorem{prop}[thm]{Proposition}
\newtheorem{cor}[thm]{Corollary}
\newtheorem{lem}[thm]{Lemma}
\newtheorem{notn}[thm]{Notation}
\newtheorem{conj}[thm]{Conjecture}
\theoremstyle{definition}
\newtheorem{Def}[thm]{Definition}
\newtheorem{ex}[thm]{Example}
\newtheoremstyle{cited}%
  {}% (space above)
  {}% (space below)
  {\itshape}% (body font)
  {}% (indent amount)
  {\bfseries}% {theorem head font}
  {.}% {punctuation after theorem head}
  {.5em}% {space after theorem head}
  {\thmname{#1} \thmnumber{#2} \thmnote{\normalfont#3}}% {theorem head spec}
\theoremstyle{cited}
\newcommand{\Nof}{\overline{N}_{c_1}^X}
\newcommand{\nof}{N_{c_1}^X}
\newcommand{\nZof}{N_{c_1}^{\mathbb{Z}^n}}
\newcommand{\NZof}{\overline N_{c_1}^{\mathbb{Z}^n}}
\newcommand{\ZZ}{\mathbb{Z}}
\newcommand{\0}{\mathit{0}}
\DeclareMathOperator{\id}{id}
\DeclareMathOperator{\Img}{Im}
\DeclareMathOperator{\Aut}{Aut}
\DeclareMathOperator{\del}{del}
\DeclareMathOperator{\spanop}{span}
\newcommand{\Z}{\mathbb{Z}}
\newcommand{\pd}{\partial}
\renewcommand{\bar}{\overline}
\begin{document}
%\onehalfspacing
\title{Computability of digital cubical singular homology of $c_1$-digital images}

\author[1,*]{Samira Sahar Jamil}
\author[2]{P. Christopher Staecker}
\author[3]{Danish Ali}
\affil[1]{Department of Mathematics, University of Notre Dame, Notre Dame, Indiana}
\affil[2]{Department of Mathematics, Fairfield University, Fairfield,
Connecticut}
\affil[3]{School of Mathematics and Computer Science, Institute of Business Administration (IBA), Karachi, Pakistan}

\affil[*]{Corresponding author: Samira S. Jamil, sjamil@nd.edu}

%\author{Samira Sahar Jamil, P. Christopher Staecker,
%Danish Ali}
\maketitle
%\tableofcontents

\begin{abstract}
Discrete cubical homology arose as the homology theory associated with discrete cubical homotopy theory. Despite the combinatorial nature of this homology, its computation has posed a significant challenge to the researchers in the field. This paper focuses on determining the discrete cubical homology of $c_1$-digital images, which are subgraphs of the integer lattice. We compare the discrete cubical homology of $c_1$-digital images with the computationally simpler $c_1$-cubical homology as a possible route to simplifying these computations. This comparison is motivated by the classical equivalence between simplicial and singular homology theories, but the construction and proof of the chain map was found to be unexpectedly difficult. Furthermore, via the chain map constructed in this work, the $c_1$-homology, developed by the second author, is shown to be functorial and homotopy-type invariant.

\end{abstract}
{\bf Keywords and phrases:} discrete cubical
homology, singular cubes,
digital topology.\\
{\bf MSC codes:} 54H30, 68U03, 55N35.

\section{Introduction}

Discrete cubical homology, introduced in \cite{Barcelo_metric}, provides a combinatorial analogue of classical cubical singular homology for graphs. 
This homology theory is associated with the discrete homotopy theory of graphs \cite{Barcelo_disc_homotopy}, \cite{BBdLL06}, \cite{BGJW19}, \cite{CK24}, \cite{CK}. 
This paper presents a useful step towards efficient computations of discrete cubical homology theory in the case where the graph is a $c_1$-digital image. 

Discrete cubical homology is defined similarly to the usual cubical singular homology theory of topological spaces (described in detail in Massey’s classical book \cite{Massey91}). The basic objects of discrete cubical homology are discrete singular cubes, which are graph maps from the discrete cube to the graph. Despite the combinatorial nature of discrete cubical homology, its computation has posed a significant challenge to researchers in the field. 
For example, it was shown in \cite{Barcelo_metric} that the discrete analogue of excision only holds for dimensions 0 and 1. The main computational challenge is due to the size of the chain groups, which grows exponentially with dimension and number of vertices in the graph. 

Despite these challenges certain progress has been made. In \cite{Barcelo_connections}, Barcelo et al. showed that certain noninjective singular cubes called connections can be removed from the chain groups without disturbing the homology. Barcelo et al. \cite{Barcelo_SIAM} show that the discrete cubical homology of graphs that have no 3-cycles or 4-cycles vanishes for dimension 2 and above.

In this paper, we focus on $c_1$-digital images, which are graphs with vertex sets in $\mathbb{Z}^n$ and no diagonal adjacencies. 
Digital images are fundamental objects of study in ``digital topology", a combinatorial theory designed to study topological properties of discrete, rather than continuous, spaces.
In digital topology, digital images are modeled as subsets of $\mathbb{Z}^n$ equipped with an adjacency relation. Notions of continuity, connectedness, homotopy, and homology have been developed. Digital cubical singular homology $dH_q(X)$ was defined in \cite{Jamil_DigCubSingHom}, and results analogous to classical algebraic topology, including functoriality, homotopy invariance, and a digital Hurewicz theorem, were established. When $X$ is viewed as a reflexive graph with edges defined by adjacency, $dH_q(X)$ coincides with the discrete cubical homology $\mathcal{H}_q(X)$ described above.

However, in the context of $c_1$-digital images, another cubical homology theory, called $c_1$-cubical homology, denoted  $H_q^{c_1}(X)$, was developed by the second author  \cite{Chris_Homotopy_relations}, where chains are generated by elementary cubes in digital image $X$. The resulting chain complex is much smaller and $H_q^{c_1}(X)$ is computationally much simpler than digital cubical singular homology $dH_q(X)$.  However, $c_1$-cubical homology has the disadvantage of not being obviously functorial.   In \cite{Chris_Homotopy_relations},
a partial proof of functoriality of $H_q^{c_1}(X)$ is given based on computer enumerations. It was conjectured \cite[Conjecture 6.3]{Chris_Homotopy_relations} that $H_q^{c_1}(X)$ and $dH_q(X)$ are isomorphic.

In this paper, motivated by the classical equivalence between simplicial and singular homology, we construct a surjective chain map
between the chain complexes which define $dH_q(X)$ and $H_q^{c_1}(X)$

\[
\beta : dC_q(X) \longrightarrow C_q^{c_1}(X),
\]
constituting a first step  toward the conjectured isomorphism between $dH_q(X)$ and $H_q^{c_1}(X)$. Although injectivity of $\beta$ on homology remains open, this map establishes a concrete relationship between the two chain complexes and shows that the $c_1$-chain complex embeds naturally as a subcomplex of the discrete cubical chain complex. 

As an application, we give a complete analytical proof of functoriality for $H_q^{c_1}(X)$, extending the  partial proof of functoriality of $H_q^{c_1}(X)$ done in \cite{Chris_Homotopy_relations}.
 While the full isomorphism between $dH_q(X)$ and $H_q^{c_1}(X)$ remains open, the map $\beta$ is a first step toward an equivalence for $c_1$-digital images,
provided that one can prove that the homology of noninjective cycles is trivial, at least for certain graphs.

The conjectured isomorphism between discrete cubical homology and $c_1$-homology is further supported by known low-dimensional results. In particular, combining the main result of \cite{Barcelo_SIAM} with the digital Hurewicz theorem proved in \cite{Barcelo_metric}, one obtains that discrete cubical homology and $c_1$-homology agree for all $1$-dimensional $c_1$-digital images and that these two homology theories coincide in dimension $1$ for arbitrary $c_1$-digital images.

The rest of the paper is organized as follows. In Section~\ref{sec_prelim}, we review basic notions of digital topology and the homologies $dH_q(X)$ and $H_q^{c_1}(X)$. Sections~\ref{sec_Genprop}--\ref{sec_symmetries} classify singular cubes in $c_1$-digital images. 
Section~\ref{sec_Genprop} develops geometric properties of singular cubes in $c_1$-digital images. Section~\ref{sec_cubeauto} studies injective digital cubes and their relation to cube automorphisms, while Section~\ref{sec_symmetries} analyzes certain classes of noninjective cubes.
Using this classification of singular cubes, Section~\ref{sec_chainmap} constructs the chain map $\beta$ between the chain complexes of the two homologies. Finally, Section~\ref{sec_final} discusses examples and limitations of the conjectured equivalence for more general graphs. While it still remains open whether the two homologies are isomorphic for $c_1$-digital images, we discuss in Section \ref{sec_final} with an example that for certain more general graphs which are not $c_1$-digital images, the analogous conjecture fails.

\section{Preliminaries, singular and elementary cubical homologies}\label{sec_prelim}
A digital image $(X,\kappa)$ is a subset $X$ of 
$\mathbb{Z}^d$ with an adjacency relation, $\kappa$. 
Various adjacency relations are used in digital 
topology for digital images, to give a concept of 
proximity or closeness among elements (or pixels)
of digital images.
Concepts like continuity, connectivity, homotopy, and 
homology are developed using these adjacency relations.

In this paper, we exclusively focus on \emph{$c_1$-adjacency}, in which two points of $ \mathbb Z^n$ are regarded as adjacent if and only if their coordinates differ by 1 in one position, and are equal in all other positions. In $\mathbb Z^2$, this is referred to as ``4-adjacency'', since each point of $\mathbb Z^2$ is $c_1$-adjacent to exactly 4 other points. In $\mathbb Z^3$, this is sometimes called ``6-adjacency''. 

Since we exclusively use $c_1$-adjacency in this paper, we will rarely need to specify the particular adjacency relation, so we will write a digital image $(X,\kappa)$ or $(X,c_1)$ simply as $X$.

For digital images  $X$ and $Y$, the function 
$f:X\to Y$ is \textit{continuous} if every 
pair of adjacent points in 
$X$ maps to equal or 
adjacent points in $Y$. 
Viewing $X$ and $Y$ as reflexive graphs, continuity of $f$ is equivalent to $f$ being a graph homomorphism.

\subsection{Digital cubical singular homology, 
{$dH_q(X)$}}
\label{ssec_Digcubsing}
We give below the definition and some 
results for digital cubical singular homology 
developed in \cite{Jamil_DigCubSingHom}. 
Consider a digital image $X$ and 
the digital interval $I=[0,1]_Z$. 
For a nonnegative integer $q$, a \emph{singular digital $q$-cube} in $X$ (or simply \emph{singular $q$-cube}) is a continuous map $\sigma:I^q \to X$. 
Define $dQ_q(X)$ to be the
free abelian group generated by the set of all $q$-cubes in $X$.
We say a singular $q$-cube $\sigma$ 
in $dQ_q(X)$ is \emph{degenerate} if there is some 
coordinate $t_i$ with $1\leq i\leq q$ such that 
$\sigma(t_1,\ldots,t_q)$ does not depend on $t_i$. Otherwise we say $\sigma$ is \emph{nondegenerate}.

Let $dD_q(X)$ be the free abelian group
generated by  the 
set of all degenerate $q$-cubes. Define the quotient group
$dC_q (X)=dQ_q (X)/dD_q (X) $. 
It can be shown that $dC_q(X)$ is isomorphic to the free 
abelian group generated by the set of all 
nondegenerate $q$-cubes. 
Define ``face operators'' $A_i,B_i:I^{q-1}\to I^q$
as
$A_i(t_1,t_2,\ldots,t_{q-1})
= (t_1,\ldots,t_{i-1},0,t_i,\ldots,t_
{q-1})
$ and $B_i(t_1,t_2,\ldots,t_{q-1})
= (t_1,\ldots,t_{i-1},1,t_i,\ldots,t_
{q-1})
$, and a  homomorphism 
$\partial_q:dC_q(X)\rightarrow dC_{q-1}(X)$ 
as 
\[ \partial_q \sigma = \sum_{i=1}^q(-1)^i(A_i\sigma-B_i\sigma), \]
where: $A_i\sigma=\sigma\circ A_i$
and $B_i\sigma=\sigma\circ B_i$.
It can be shown that 
$\partial_{q-1}\circ\partial_q=0$. 
Therefore, 
$(dC_q (X),\partial_q)$ 
is a chain 
complex, and its homology is denoted $dH_q(X)$.

%\\
%We introduce some more terminology
%in a similar way as in classical texts 
%(such as \cite{Massey91}).
Elements of $dC_q(X)$, $\ker \partial_q$,
$\Img \partial_{q+1}$ and $dH_q(X)$ are called
$q$-chains, $q$-cycles, $q$-boundaries
and homology classes, respectively.
The singular $(q-1)$-cubes $A_i\sigma$ and 
$B_i\sigma$ are called front and back 
$i$-faces of $\sigma$, respectively.
% or 
%sometimes called codimension-$1$ (or
%dimension-$(q-1)$) 
%faces of $\sigma$.

%A codimension-$k$ (or dimension-$(q-k)$)
%face of a digital 
%$q$-cube is a codimension-$1$ face of a
%codimension-$(k+1)$ face of $\sigma$.
%The codimension-$2$ faces of $\sigma$
%satisfy the following commutativity 
%relations, for $1\leq i< j\leq q$:
%$\Phi_i\Phi_j\sigma=
%\Phi_{j-1}\Phi_i\sigma$,
%where $\Phi\in\{A,B\}$.

%Using Propositions 3.1 and 3.2 of \cite
%{Jamil_DigCubSingHom}, the following can be stated.
%
%\begin{prop}\label{prop_iso_points}
%Let $(X,\kappa)$ be a digital image consisting of 
%$d$ isolated points. Then $dH_0(X)=
%\mathbb{Z}^d$ and $dH_q(X)=0$ for $q>0$. 
%\end{prop}
%
%\begin{prop}\label{prop_H0}
%Let $(X,\kappa)$ be a digital image consisting of 
%$d$ components. Then $dH_0(X)=
%\mathbb{Z}^d$. 
%\end{prop}

Corollary 3.8 and Theorem 5.10 of \cite{Jamil_DigCubSingHom} show that $dH_q$ is homotopy-type invariant in the sense we will describe in Section \ref{sec_chainmap}. A similar result for discrete homology theory for metric spaces also appears in \cite{Barcelo_metric}.
%\todo{do we need this?}\todo[color={red!100!green!33}]{No}
%\begin{prop} \label{homotopy_invariance}
%
%Let  $(X,\kappa)$ and $(Y,\lambda)$ be digital images
%of the same homotopy type. Then
%$dH_q(X)\cong dH_q(Y)$.
%\end{prop} 

%\begin{prop}\label{ecision_like}
%Consider subsets $A$ and $W$
%of a digital image $X$. Suppose that there is a positive integer $i$, with $Cl^i (W)\subset Int^i (A)$, then  $dH_q(X-W,A-W)\cong dH_q(X,A)$, for integers $q< i+1$.
%\end{prop}

%\section{Digital cubical homology}
%Here we present a simple and computable variant of digital cubical singular homology. If we restrict the basis of chain groups $C_q(X)$ to injective maps we get another homology.
%\\
%For digital image $(X,\kappa)$, let  $\widetilde C_q(X)$ be a vector space with basis as the set of all injective maps $T:I^q\rightarrow X$. 
%Since faces $A_iT$ and $B_iT$ of injective maps $T$ are injective as well, $(\widetilde C_\ast (X),\partial_\ast)$ is a chain complex, with homology $\widetilde H_\ast(X)$. 
%The inclusion from $\widetilde C_q(X)\to  C_q(X)$ induces $\eta:\widetilde H_q(X)\rightarrow H_q(X)$ for all $n$.

\subsection{{$c_1$}-cubical homology, 
{$H^{c_1}_q(X)$}}
\label{ssec_c1cubhom}
A homology for digital images with 
$c_1$-adacency, called $c_1$-cubical homology, 
was  given in \cite{Chris_Homotopy_relations}. 
Basic definitions from 
\cite{Chris_Homotopy_relations} are given 
below:
For any integer $a$, a set of the  form 
$[a,a+1]_\mathbb{Z}=\{a,a+1\}$ or 
$[a,a]_\mathbb{Z}=\{a\}$ is called an 
\textit{elementary interval}. 
The elementary interval $[a,a]_\mathbb{Z}$ is 
called \textit{degenerate}, while the 
elementary interval $[a,a+1]_\mathbb{Z}$ is 
called \textit{nondegenerate}.
An \textit{elementary cube} $Q$ is a cartesian 
product of $n$ elementary intervals $J_i$, for 
$i\in\{1,2,\ldots,n\}$, 
\textit{i.e.}, $Q=J_1\times\cdots\times J_n$.
The \textit{dimension} of $Q$ is the number of nondegenerate elementary 
intervals $J_i$. 
If the dimension of elementary cube is $q$, we 
call it an \emph{elementary $q$-cube}.
For a $c_1$-digital image $X$ and $q\geq 
0$, let $C_q^{c_1}(X)$ be the free abelian 
group  
generated by the set of all elementary 
$q$-cubes in $X$. The face maps $A_i^{c_1},B_i^{c_1}:C_q^{c_1}(X)\to 
C_{q-1}^{c_1}(X)$ are defined as follows:
\[A_i^{c_1}Q=J_1\times \cdots\times J_{i-1}\times 
\{\min {J_i}\}
\times J_{i+1}\times\cdots\times J_n\]
\[B_i^{c_1}Q=J_1\times \cdots\times J_{i-1}\times 
\{\max{J_i}\}
\times J_{i+1}\times\cdots\times J_n\]

Note that $A_i^{c_1}Q$ and $B_i^{c_1}Q$ are 
distinct only 
if $J_i$ is nondegenerate. 
Let $\{k_1,\ldots,k_q\}\subset\{1,\ldots,n\}$ be 
the set of all indices such that $J_{k_i}$ is
nondegenerate.
The boundary operator is given as: 
$\partial_q^{c_1}Q=
\sum_{i=1}^q(-1)^i(A_{k_i}^{c_1}Q-
B_{k_i}^{c_1}Q)$.
It can be shown that 
$(C_q^{c_1}
(X),
\partial_q^{c_1})$ is a chain complex, and the associated 
homology $H_q^{c_1}(X)$ is called the
\emph{$c_1$-cubical homology} of the digital image 
$X$.

The main goals of the paper are to define a surjective chain map $\beta:dC_q(X) \to C_q^{c_1}(X)$, and to prove that the homology theory $H_q^{c_1}$ is functorial and homotopy-type invariant. Whether this chain map $\beta$ induces an isomorphism in homology is still an open question.

%The following results for 
%$c_1$-cubical homology from
%\cite{Chris_Homotopy_relations} are used 
%in this paper:
%\begin{cprop}[{{\cite
%{Chris_Homotopy_relations}}}]
%\label{prop_iso_points_c1}
%Let $(X,c_1)$ be a digital image consisting of 
%$d$ isolated points. Then $H_0^{c_1}(X)=
%\mathbb{Z}^d$ and $H_q^{c_1}(X)=0$ for $q>0$. 
%\end{cprop}
%
%\begin{cprop}[{{\cite
%{Chris_Homotopy_relations}}}]\label{prop_H0_c1}
%Let $(X,c_1)$ be a digital image consisting of 
%$d$ components. Then $H_0^{c_1}(X)=
%\mathbb{Z}^d$. 
%\end{cprop}

%For a $(c_1,c_1)$-continuous map  $f:X\to Y$  
%between digital images $(X,c_1)$ and 
%$(Y,c_1)$, the existence of a chain map
%$f_\#^{c_1}:C_q^{c_1}(X)\to C_q^{c_1}(Y)$ was 
%conjectured in 
%\cite{Chris_Homotopy_relations}.
%However, this conjecture was 
%partially proven based on computer enumerations
%and complete analytical proof is given in
%this paper as Corollary 
%\ref{cor_functoriality_c_1_homology}. 
%\begin{cconj}[{{\cite[Conjecture~36]
%{Chris_Homotopy_relations}}}]
%Let $(X,c_1)$ and $(Y,c_1)$ be digital images 
%and $f:X\to Y$ be a $(c_1,c_1)$-continuous 
%function. 
%Then $f_\#:C_q^{c_1}(X)\to C_q^{c_1}(Y)$ is a 
%chain map. 
%\end{cconj}

%Throughout this paper we consider finite digital images
%with $c_1$-adjacency.

\section{General properties of singular cubes}\label{sec_Genprop}
In this section we give some basic facts about singular cubes in digital images with $c_1$-adjacency.

%\section{Properties of general cubes}
\subsection{$c_1$-neighborhoods}
\label{ssec_c1nhd}
For a digital image $(X,c_1)$, and $x\in X$, 
define the $c_1$-neighborhood and the deleted 
$c_1$-neighborhood of $x$, respectively as:
\begin{align*}
\Nof(x)&=\{y\in X\mid x 
\text{ is }c_1\text{-adjacent or equal to } y\}, \\
\nof(x)&=\{y\in X\mid x 
\text{ is }c_1\text{-adjacent   to } y\}.
\end{align*}

Let $\Nof(x_1,\ldots,x_n)$ denote the set 
$\Nof(x_1)\cap\cdots\cap\Nof(x_n)$ of 
common $c_1$-neighbors of elements 
$x_1,\ldots,x_n$ of $(X,c_1)$, and similarly for 
$\nof$.

\begin{prop}\label{prop_nhd_intersect}
For distinct elements $x,y$ in 
$ \mathbb{Z}^n$, the set $\nZof(x,y)$ is nonempty
if and only if one of the following is true:
\begin{itemize}
\item $x$ and $y$ differ in exactly $2$ 
coordinates by $1$ unit each.
\item $x$ and $y$ differ in exactly $1$ 
coordinate by $2$ units.
\end{itemize}
\end{prop}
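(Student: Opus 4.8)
The plan is to prove both directions of the biconditional directly from the definition of $c_1$-adjacency. Write $x=(x_1,\ldots,x_n)$ and $y=(y_1,\ldots,y_n)$, and let $S=\{i : x_i\neq y_i\}$ be the set of coordinates where they differ. A point $z$ lies in $\nZof(x,y)$ precisely when $z$ is $c_1$-adjacent to both $x$ and $y$, i.e. $z$ differs from $x$ in exactly one coordinate by exactly $1$, and likewise for $y$. First I would observe that outside of $S$ we must have $z_i=x_i=y_i$: if $z$ differed from $x$ in a coordinate $i\notin S$, then $z$ would differ from $y$ in that same coordinate (since $x_i=y_i$), and also $z$ cannot differ from $x$ anywhere in $S$ (it already uses its single allowed deviation at $i$), so $z=x$ on $S$, forcing $z$ to differ from $y$ in every coordinate of $S$ plus coordinate $i$; since $x\neq y$ means $S\neq\emptyset$, that is at least two coordinates of difference with $y$, contradiction. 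Hence $|S|\in\{1,2\}$ and all the action happens on $S$.

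Next I would split into the two cases. If $|S|=2$, say $S=\{i,j\}$, then $z$ must agree with $x$ in one of $\{i,j\}$ and differ by $1$ in the other, and simultaneously agree with $y$ in one and differ by $1$ in the other. Agreeing with $x$ at $i$ and with $y$ at $j$ (say) forces $z_i=x_i$, $z_j=y_j$, and then $|z_i-y_i|=|x_i-y_i|=1$ and $|z_j-x_j|=|y_j-x_j|=1$, so $x$ and $y$ differ by exactly $1$ in each of the two coordinates — and conversely, when they do, this $z$ works. If instead $z$ agreed with $x$ at $i$ and with $y$ at $i$, that forces $x_i=z_i=y_i$, contradicting $i\in S$; so the only possibility is the "crossed" one, giving the first bullet. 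If $|S|=1$, say $S=\{i\}$, then $z$ agrees with $x$ and $y$ off coordinate $i$, and $|z_i-x_i|=|z_i-y_i|=1$ with $z$ differing from both only at $i$; two integers at distance $1$ from the same integer $z_i$ are either equal or differ by $2$, and since $x_i\neq y_i$ they differ by $2$, giving the second bullet; conversely if $x_i-y_i=\pm2$, then $z_i=(x_i+y_i)/2$ with all other coordinates equal gives a valid common neighbor.

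For the converse direction I would just exhibit the witness $z$ in each of the two listed cases, which the computations above already supply: in the first case $z=(x_1,\ldots,x_{i-1},x_i,x_{i+1},\ldots,x_{j-1},y_j,x_{j+1},\ldots)$ — i.e. $x$ with its $j$-th coordinate replaced by $y_j$ — and in the second case $z$ is the midpoint of $x$ and $y$, which is an integer point precisely because the single coordinate difference is even. I expect the main obstacle to be purely organizational rather than mathematical: handling the bookkeeping of which coordinate $z$ "uses" its allowed unit deviation on, and making sure the argument that $|S|\le 2$ is airtight (the subtle point being that $z$ gets only one unit of deviation total from each of $x$ and $y$, so any coordinate where $z$ already differs from both of them is "spent"). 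Once that counting is set up carefully, both directions fall out by elementary case analysis, and no deeper input is needed.
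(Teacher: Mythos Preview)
Your proposal is correct and follows essentially the same approach as the paper: both arguments identify the (at most two) coordinates where a common $c_1$-neighbor $z$ deviates from $x$ and from $y$, and case-split on whether these coincide. The paper phrases this via the two indices $j,k$ where $z$ differs from $x$ and $y$ respectively (splitting on $j=k$ versus $j\neq k$), while you phrase it via the difference set $S=\{i:x_i\neq y_i\}$ (splitting on $|S|=1$ versus $|S|=2$); your write-up is in fact more complete, since the paper's proof only treats the forward implication and leaves the easy converse implicit.
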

\begin{proof}
Let $z\in\nZof(x,y)$. There are 
indices $j,k\in\{1,2,\ldots,n\}$ such that
\[
z_i=x_i, \forall i\neq j \text{ and } x_j\in
\{z_j\pm 1\},\]
\[
z_i=y_i, \forall i\neq k \text{ and } y_k\in
\{z_k\pm 1\}.\]
Therefore $y_i=x_i, \forall i\notin\{j,k\}$. 
Furthermore, if $j\neq k$ then $x$ and $y$ 
differ by 1 unit in coordinates $j$ and $k$, 
while if $j= k$ then $x$ and $y$ differ by 2 
units in coordinate $j$.
\end{proof}
We can conclude from Proposition 
\ref{prop_nhd_intersect} that whenever 
$c_1$-neighborhoods of two distinct elements 
intersect, they intersect in at most 
two elements. Also, the deleted $c_1$-neighborhoods 
of two $c_1$-adjacent elements do 
not intersect.  \newpage
\begin{cor}\label{cor_comn_nbrs}
The following is true for distinct elements 
$x,y$ in $\mathbb{Z}^n$:
\begin{enumerate}[label={\emph{({\roman*})}}]
\item $\left\vert\NZof(x,y)\right\vert\leq 2$.
\item If $x$ and $y$ are $c_1$-neighbors, then 
$\NZof(x,y)=\{x,y\}$ and $\nZof(x,y)=\emptyset$.
\item The elements $x$ and $y$ differ in exactly $2$ 
coordinates by $1$ unit each iff they have exactly two common neighbors, i.e.,
$\nZof(x,y)=\{z_1,z_2\}$.
\end{enumerate}
\end{cor}
The following corollary asserts that  
$c_1$-neighborhoods of more than 
$2$ points intersect in 
at most $1$ point.
\begin{cor}\label{cor_grt_thn_3}
Let $k\ge 3$ and $i\in \{1,2,\ldots,k\}$.
For distinct elements $x_i$ in $\mathbb{Z}^n$, 
$\left\vert\nZof(x_1,\ldots,x_k)
\right\vert\leq 1$. 
If $k>2n$, then $\nZof(x_1,\ldots,x_k)=\emptyset$.
\end{cor}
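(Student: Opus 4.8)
The plan is to bootstrap from Proposition \ref{prop_nhd_intersect} and Corollary \ref{cor_comn_nbrs} by exploiting the rigidity of the $c_1$ structure. First I would prove the bound $|\nZof(x_1,\ldots,x_k)| \leq 1$ for $k \geq 3$ by contradiction: suppose $z$ and $z'$ are two distinct common $c_1$-neighbors of all of $x_1,\ldots,x_k$. Then for each $i$, both $z$ and $z'$ lie in $\nZof(x_i)$, so $x_i \in \NZof(z,z')$ (using that ``$c_1$-adjacent to $x_i$'' is a symmetric relation, so $x_i$ is a common neighbor of $z$ and $z'$). But by Corollary \ref{cor_comn_nbrs}(i) applied to the distinct pair $z, z'$, the set $\NZof(z,z')$ has at most $2$ elements. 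Hence there are at most $2$ possible values for the $x_i$, contradicting the assumption that $x_1,\ldots,x_k$ are $k \geq 3$ distinct points. This gives the first claim.

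For the second claim, that $k > 2n$ forces $\nZof(x_1,\ldots,x_k) = \emptyset$: I would argue that a single point $z \in \Zn$ has exactly $2n$ deleted $c_1$-neighbors, namely $z \pm e_j$ for $j = 1,\ldots,n$ where $e_j$ is the $j$-th standard basis vector. So $|\nZof(z)| = 2n$ for every $z$. If $\nZof(x_1,\ldots,x_k)$ were nonempty, containing some $z$, then each $x_i$ would be one of the $2n$ elements of $\nZof(z)$; since the $x_i$ are distinct and there are $k > 2n$ of them, this is impossible. Hence the intersection is empty.

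I do not anticipate a serious obstacle here — the corollary is essentially a counting argument riding on the earlier results. The one point requiring a little care is the symmetry move in the first part: one must note that $z \in \nZof(x_i)$ means $z$ is $c_1$-adjacent to $x_i$, which (since $c_1$-adjacency is symmetric) is equivalent to $x_i \in \nZof(z)$, and likewise $x_i \in \nZof(z')$, so indeed $x_i \in \NZof(z,z')$ (in fact in $\nZof(z,z')$). The switch of roles between the ``base points'' $x_i$ and the ``neighbors'' $z, z'$ is what lets Corollary \ref{cor_comn_nbrs}(i) do the work. Everything else is direct.
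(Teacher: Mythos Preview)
Your argument is correct. The second claim (the $k>2n$ case) is handled exactly as the paper does it.

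For the first claim your route is genuinely different from the paper's. The paper argues directly for $k=3$: given a putative common neighbor $w$ of $x,y,z$, it notes that $w$ differs from each of $x,y,z$ in a single coordinate, and then observes that every coordinate of $w$ agrees with the corresponding coordinate of at least one of $x,y,z$, so $w$ is determined; the case $k>3$ then follows by the containment $\nZof(x_1,\ldots,x_k)\subset\nZof(x_1,x_2,x_3)$. You instead assume two distinct common neighbors $z,z'$ exist, use the symmetry of $c_1$-adjacency to place all the $x_i$ inside $\NZof(z,z')$, and invoke Corollary~\ref{cor_comn_nbrs}(i) to bound that set by $2$, contradicting $k\ge 3$. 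Your version is cleaner in that it treats all $k\ge 3$ at once and leverages the preceding corollary rather than redoing a coordinate computation; the paper's version is more self-contained and makes the uniqueness of $w$ explicit in terms of coordinates. Both are short and valid.
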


%\begin{proof}
%For $k=3$, we have $3$ distinct points $x,y,z$ in  
%$\ZZ^n$.
%If $\nZof(x,y,z)$ is 
%nonempty then let $w\in\nZof(x,y,z)$, 
%then there exist indices 
%$i_1,i_2,i_3$ in $\{1,2,\ldots,n\}$, such that
%\[
%w_i=x_i, \forall i\neq i_1 \text{ and } w_{i_1}\in
%\{x_{i_1}\pm 1\},\]
%\[
%w_i=y_i, \forall i\neq i_2 \text{ and } w_{i_2}\in
%\{y_{i_2}\pm 1\},\]
%\[
%w_i=z_i, \forall i\neq i_3 \text{ and } w_{i_3}\in
%\{z_{i_3}\pm 1\}.\]
%It can be seen that each coordinate of $w$ is 
%equal to the corresponding coordinate of $x,y$ or 
%$z$, therefore, $w$ is unique \textcolor{blue}{ in the sense that if there were any $w'$ satisfying the same conditions as above for $w$, then $w=w'$}.
%\todo{Curtis:The sentence ‚ÄúIt can be seen that each ‚Ä¶. w is unique.‚Äù Is imprecise and
%unclear. Specifically, ‚Äúunique in what sense‚Äù? This sentence should be expanded so that the logic is
%clearly expressed.}
%For $k>3$, the result follows 
%from $\nZof(x_1,\ldots,x_k)\subset
%nZof(x_1,x_2,x_3)$. 
%Common neigborhood of more than $2n$ elements is 
%empty, because an element in $\ZZ^n$ has at most $2n$
%$c_1$-neighbors.
%\end{proof}

\begin{proof}
We have $\nZof(x_1,\dots,x_k) \subseteq \nZof(x_1,x_2,x_3)$, so it will suffice to show that $|\nZof(x_1,x_2,x_3)| \le 1$. To show this, assume that $\nZof(x_1,x_2,x_3)$ is nonempty, and we will show that it has only 1 element.
If $\nZof(x,y,z)$ is 
nonempty, then let $w\in\nZof(x,y,z)$, then $w$ differs from each of $x,y,z$ in one coordinate only, and in that coordinate by a difference of 1. The coordinate in which $w$ differs from $x,y,$ and $z$ cannot be the same for all 3 points, since the difference can only be $+1$ or $-1$. Thus we may assume without loss of generality that $w$ differs from $x$ in some coordinate and differs from $y$ in some different coordinate.

Without loss of generality we may further assume that $w$ differs from $x$ in coordinate 1 (and is equal in all other coordinates), and from $y$ in coordinate 2 (and is equal in all other coordinates). Then we must have $w=(y_1,x_2,x_3,\dots,x_n)$, with $x_i=y_i$ for $i\ge 3$. Since $w$ is uniquely determined by $x,y,z$, there can be no other element of $\nZof(x,y,z)$. Thus we have shown $\left\vert\nZof(x_1,\ldots,x_k) \right\vert\leq 1$. 

For the second statement, note that the common neighborhood of more than $2n$ elements is 
empty, because an element in $\ZZ^n$ has at most $2n$ 
$c_1$-neighbors.
\end{proof}

\subsection{The elementary cube 
{$I^q$} and the digital {$q$}-cube}
\label{ssec_eltcub_digcub}
%Since elements of $I^q$ are $q$-tuples of $0$ and 
%$1$, we can use  logical operators componentwise.
%For $x=(x_i)_{i=1}^q$ and $y=(y_i)_{i=1}^q$ in 
%$ I^q$, $x\vee y=(x_i\vee y_i)_{i=1}^q$.

Partition the set $I^q$ into sets 
\[ S_n=\left\{(t_1,t_2,\ldots,t_q) \mid \sum_{j=1}^qt_j=n\right\}, n
\in\{0,1,\ldots,q\}. \]
Clearly the $S_n$ form a partition of the cube $I^q$, and $S_0$ and $S_n$ are singletons.
We develop below some notation for elements of 
$S_n$.
\begin{notn}[$\theta$-notation for elements of 
$I^q$]~\\
For elementary $q$-cube $I^q$, let $\0$ denote
the only element of $S_0$.
For a set $\{i_1,\ldots,i_n\}$ of $n$ distinct
indices in $\{1,\ldots,q\}$,
let $\theta^{i_1i_2\cdots i_n}$ 
denote the element of $S_n$, 
with $1$s in coordinates 
$i_1,\ldots,i_n$, and $0$s everywhere else.
If $n=0$, then $\theta^{i_1i_2\cdots i_n}=\0$.
\end{notn}
The elements $\theta^{i_1i_2\cdots i_n}$ and 
$\theta^{l_1l_2\cdots l_n}$ are equal if and only if the sets 
$\{i_1,i_2,\ldots, i_n\}$  and $\{l_1,l_2,\ldots, 
l_n\}$ are equal. Therefore the order of superscripts 
does not make any difference.
For $1\leq n\leq q-1$, each element in 
$S_n$ has $c_1$-neighbors only in $S_{n-1}$ and 
$S_{n+1}$. 
%\textit{i.e.} if $x\in S_n$ and 
%$y\in\nn^{I^q}(x)$, then $y\in S_{n-1}$ or 
%$y\in S_{n+1}$.

The next two results show how two different elements of $\sigma(I^q)$ differ from each other when $\sigma$ is injective on a certain subset of $I^q$.

\begin{lem}\label{lem_injec_diff}
    Let $\sigma:I^q\to X$ be a singular $q$-cube. If the restriction $\sigma\vert_{S_1\cup\cdots\cup S_m}$ is injective for $m\geq 
 2$, then two elements $\sigma(\theta^{i_1\cdots i_n})$ and $\sigma(\theta^{l_1\cdots l_k})$ for $0\leq n,k\leq m$, differ from each other in exactly as many coordinates as there are elements in the symmetric difference of sets $\{i_1,\ldots,i_n\}$ and $\{l_1,\ldots,l_k\}$.
\end{lem}
\begin{proof}
We will compare the coordinates of $\sigma(\theta^{i_1\dots i_n})$ and $\sigma(\theta^{l_1 \dots l_k})$ by comparing each to $\sigma(\0)$. 
Let $c_i$ be the single coordinate in which $\sigma(\theta^i)$ differs from $\sigma(\0)$. It will suffice to show that $\sigma(\theta^{i_1\dots i_n})$ differs from $\sigma(\0)$ in exactly the coordinates $c_{i_1},\dots, c_{i_n}$, for then $\sigma(\theta^{i_1\dots i_n})$ and  $\sigma(\theta^{l_1 \dots l_k})$ would differ from each other in coordinates $c_{i_1},\ldots,c_{i_n}$ and $c_{l_1},\ldots,c_{l_k}$, and equal to each other in all the other coordinates.%\todo[color={red!100!green!33},inline]{I tried to add something about $\ell$s, please check!}

First we argue that $c_i\neq c_j$ when $i\neq j$. Observe that $\sigma(\0)$ and $\sigma(\theta^{ij})$ have exactly 2 distinct common neighbors, namely $\sigma(\theta^{i})$ and $\sigma(\theta^{j})$. Thus by Corollary \ref{cor_comn_nbrs} (iii), the points $\sigma(\0)$ and $\sigma(\theta^{ij})$ differ in exactly two distinct coordinates $c_i$ and $c_j$.

Now we show that $\sigma(\0)$ differs from $\sigma(\theta^{i_1\cdots i_n})$ in coordinates $c_{i_1},c_{i_2},\ldots,c_{i_n}$ by induction on $n$. The argument above forms the base case of the induction when $n=1$ or $2$. From Corollary \ref{cor_comn_nbrs} (iii), $\sigma(\theta^{i_1\cdots i_{n-2}})$  and  $\sigma(\theta^{i_1\cdots i_n})$ differ from each other in coordinates $c_{i_{n-1}}$ and $c_{i_{n}}$ and agree in all the other coordinates. By induction, $\sigma(\0)$ and $\sigma(\theta^{i_1\cdots i_{n-2}})$ differ exactly in coordinates $c_{i_1},\dots,c_{i_{n-2}}$. Thus we conclude that $\sigma(\0)$  and  $\sigma(\theta^{i_1\cdots i_n})$ differ in coordinates $c_{i_1},\ldots,c_{i_{n}}$.
\end{proof}

We prove below that if the digital 
$q$-cube $\sigma$ is injective when restricted 
to a certain subset of $I^q$, 
then $\sigma$ is injective.
\begin{lem}\label{lem_sig_inj}
Let $\sigma:I^q\to X$ be a singular $q$-cube, 
$q\geq 2$ in digital image $(X,c_1)$. If the 
restriction of $\sigma$ to $S_0\cup S_1\cup S_2$ 
is injective, then $\sigma$ is injective. 
\end{lem}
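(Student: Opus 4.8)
The plan is to prove the stronger statement that, under the hypothesis, the cube $\sigma$ is \emph{rigid}. Write $X\subseteq\ZZ^d$ and, for each $i\in\{1,\dots,q\}$, set $w_i:=\sigma(\theta^i)-\sigma(\0)\in\ZZ^d$. I claim that every $w_i$ is a unit coordinate vector $\pm e_{c(i)}$ for some $c(i)\in\{1,\dots,d\}$, that the indices $c(1),\dots,c(q)$ are pairwise distinct, and that $\sigma(u)=\sigma(\0)+\sum_{i\,:\,u_i=1}w_i$ for every $u\in I^q$. Since the $w_i$ then have pairwise disjoint supports, the map $u\mapsto\sigma(\0)+\sum_{i\,:\,u_i=1}w_i$ is injective, so this formula gives the lemma. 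I would prove the formula by induction on the level $n$, that is, on the index of the subset $S_n$ containing the argument.

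The base of the induction comprises levels $0$, $1$ and $2$. Because $\theta^i$ is $c_1$-adjacent to $\0$ and $\sigma$ is injective on $S_0\cup S_1$, continuity forces $\sigma(\theta^i)$ to be $c_1$-adjacent to, but not equal to, $\sigma(\0)$, so $w_i=\pm e_{c(i)}$. Now fix $i\neq j$ and look at $\theta^{ij}\in S_2$, which is $c_1$-adjacent to both $\theta^i$ and $\theta^j$; by injectivity of $\sigma$ on $S_0\cup S_1\cup S_2$, the point $\sigma(\theta^{ij})$ is $c_1$-adjacent to both $\sigma(\theta^i)$ and $\sigma(\theta^j)$ and is distinct from $\sigma(\0)$. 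If $c(i)=c(j)$, then (as $\sigma(\theta^i)\neq\sigma(\theta^j)$) the points $\sigma(\theta^i)$ and $\sigma(\theta^j)$ would be the two opposite unit $c_1$-neighbors of $\sigma(\0)$ along direction $c(i)$, whose only common $c_1$-neighbor is $\sigma(\0)$ itself (a short direct check, in the style of the proof of Proposition~\ref{prop_nhd_intersect}); this contradicts that $\sigma(\theta^{ij})$ is a common $c_1$-neighbor of them distinct from $\sigma(\0)$. Hence $c(i)\neq c(j)$, so $\sigma(\theta^i)$ and $\sigma(\theta^j)$ differ by one unit in each of the two coordinates $c(i)$, $c(j)$; then Proposition~\ref{prop_nhd_intersect} and Corollary~\ref{cor_comn_nbrs}(i) show that their common $c_1$-neighbors are exactly $\sigma(\0)$ and $\sigma(\0)+w_i+w_j$, and since $\sigma(\theta^{ij})$ is such a common neighbor other than $\sigma(\0)$, we conclude $\sigma(\theta^{ij})=\sigma(\0)+w_i+w_j$. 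This is the claimed formula on $S_0\cup S_1\cup S_2$.

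For the inductive step, fix $n\geq 3$, assume the formula on all levels below $n$, and take $u=\theta^{i_1\cdots i_n}\in S_n$. By Proposition~\ref{prop_uv_break} the $c_1$-neighbors of $u$ lying in $S_{n-1}$ are precisely the $n$ elements $v^{(j)}=\theta^{i_1\cdots\widehat{i_j}\cdots i_n}$ for $j=1,\dots,n$, and Remark~\ref{rem_theta_sig} gives $\sigma(u)\in\bigcap_{j=1}^{n}\Nof(\sigma(v^{(j)}))$. By the inductive hypothesis $\sigma(v^{(j)})=\sigma(\0)+\sum_{k\neq j}w_{i_k}$, and since $c$ is injective the vectors $w_{i_k}$ have disjoint supports, so these $n$ points are mutually distinct and mutually non-$c_1$-adjacent. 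Since $\sigma(u)$ is $c_1$-adjacent or equal to each $\sigma(v^{(j)})$ while the $\sigma(v^{(j)})$ are mutually distinct and mutually non-adjacent, $\sigma(u)$ cannot coincide with any $\sigma(v^{(j)})$ (that would make two of them adjacent or equal); hence $\sigma(u)$ lies in the deleted common $c_1$-neighborhood of $\sigma(v^{(1)}),\dots,\sigma(v^{(n)})$ in $\ZZ^d$, which by Corollary~\ref{cor_grt_thn_3} contains at most one point. Since $\sigma(\0)+\sum_{k=1}^{n}w_{i_k}$ is visibly a common $c_1$-neighbor of all the $\sigma(v^{(j)})$, that set equals $\{\sigma(\0)+\sum_{k=1}^{n}w_{i_k}\}$, whence $\sigma(u)=\sigma(\0)+\sum_{k=1}^{n}w_{i_k}$. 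This completes the induction, and hence the proof.

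I expect the main obstacle to be the level-$2$ analysis, specifically excluding the possibility that two edges at $\0$ are mapped along the same coordinate direction; this is exactly the place where injectivity of $\sigma$ on $S_0\cup S_2$ is used in an essential way, since otherwise $\sigma(\theta^{ij})$ could equal $\sigma(\0)$ and the argument breaks down. The remaining work is routine bookkeeping with the neighborhood estimates of Section~\ref{ssec_c1nhd}, with Proposition~\ref{prop_uv_break}, and with Remark~\ref{rem_theta_sig}; the one recurring subtlety is that continuity only yields ``$c_1$-adjacent or equal'', so before every appeal to the geometric lemmas (which are phrased for genuine $c_1$-neighbors) one must invoke injectivity to rule out equality.
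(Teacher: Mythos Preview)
Your argument is correct. It differs from the paper's in that you prove a strictly stronger statement: you derive the explicit affine formula $\sigma(u)=\sigma(\0)+\sum_{i:u_i=1}w_i$ with the $w_i$ having pairwise disjoint supports, from which injectivity is immediate. The paper, by contrast, never writes down a formula for $\sigma$; it simply runs the induction ``$\sigma|_{S_0\cup\dots\cup S_{n-1}}$ injective $\Rightarrow$ $\sigma|_{S_0\cup\dots\cup S_n}$ injective'' by observing via Remark~\ref{rem_theta_sig} that each $\sigma(\theta^{i_1\cdots i_n})$ is a common deleted neighbour of $n\ge 3$ distinct image points and then invoking Corollary~\ref{cor_grt_thn_3} to pin that value down uniquely. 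Your route costs a more careful level-$2$ analysis (ruling out $c(i)=c(j)$, which is exactly where injectivity on $S_0\cup S_2$ enters), but in return it yields, in a single pass, not only the present lemma but essentially the content of Lemma~\ref{cube_unique} and Theorem~\ref{thm_inj_is_emb} as well: that an injective cube is determined by its values on $S_0\cup S_1$ and is in fact an embedding. The paper separates those out as subsequent results.
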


\begin{proof}
Let $q>2$ because the statement is trivially true 
for $q=2$.
It suffices to show that if $\sigma\vert_{S_0\cup 
\cdots\cup S_{n-1}}$ is injective, then $\sigma
\vert_{S_0\cup \cdots\cup S_{n}}$ is injective, 
for $3\leq n\leq q $.
 We prove the injectivity of $\sigma
\vert_{S_0\cup \cdots\cup S_{n}}$  by showing that $\sigma(\theta^{i_1\cdots i_n})\neq \sigma(\theta^{l_1\cdots l_m})$, for $0\leq m\leq n$, when the sets $\{i_1,\ldots,i_n\}$ and $\{l_1,\ldots,l_m\}$ are not equal.
We consider 3 different cases for the sets $\{i_1,\ldots,i_n\}$ and $\{l_1,\ldots,l_m\}$ below.

\textit{Case 1}: $0\leq m<n$ and $\{i_1,\ldots,i_n\}\cap \{l_1,\ldots,l_m\}\neq\emptyset$~~
Let $j$ be a common element  in both $\{i_1,\ldots,i_n\}$ and $\{l_1,\ldots,l_m\}$. Since $\{i_1,\ldots,i_n\}$ has more elements than $\{l_1,\ldots,l_m\}$, 
there is $i_\alpha\in \{i_1,\ldots,i_n\}$ such that $i_\alpha\notin
\{l_1,\ldots,l_m\}$. Assuming that $\sigma(\theta^{i_1\cdots i_n})= \sigma(\theta^{l_1\cdots l_m})$,
 the element $\sigma(\theta^{i_1\cdots i_n \widehat{j}})$ is a $c_1$-neighbor of $\sigma(\theta^{l_1\cdots l_m})$, which is not possible because the symmetric difference of the sets $\{i_1,\ldots,i_n\}-\{j\}$ and $\{l_1,\ldots,l_m\}$ contains at least two elements $j$ and $i_\alpha$, and by Lemma \ref{lem_injec_diff}, $\sigma(\theta^{i_1\cdots i_n \widehat{j}})$ and $\sigma(\theta^{l_1\cdots l_m})$ differ in at least two coordinates.

\textit{Case 2}: $0\leq m<n$ and $\{i_1,\ldots,i_n\}\cap \{l_1,\ldots,l_m\}=\emptyset$~~
 Assuming that $\sigma(\theta^{i_1\cdots i_n})= \sigma(\theta^{l_1\cdots l_m})$,
 the element $\sigma(\theta^{i_1\cdots i_{n-1}} )$ is a $c_1$-neighbor of $\sigma(\theta^{l_1\cdots l_m})$, which is not possible because the sets $\{i_1,\ldots,i_{n-1}\}$ and $\{l_1,\ldots,l_m\}$ have at least $m+n-1\geq 2$ elements in their symmetric difference, and by Lemma \ref{lem_injec_diff} $\sigma(\theta^{i_1\cdots i_{n-1}} )$  and  $\sigma(\theta^{l_1\cdots l_m})$ differ in at least two coordinates.
 
 \textit{Case 3}: $m=n$~~
 Since both the sets $\{i_1,\ldots,i_n\}$ and $ \{l_1,\ldots,l_n\}$ are different from each other but have same number of elements, 
there is $i_\alpha\in \{i_1,\ldots,i_n\}$ such that $i_\alpha\notin
\{l_1,\ldots,l_n\}$ and also there is $l_\beta\in
\{l_1,\ldots,l_n\}$ such that $l_\beta\notin\{i_1,\ldots,i_n\}$. Consider some $i_j$ different from  $i_\alpha$, and some $l_k$ different from $l_\beta$.
If $\sigma(\theta^{i_1\cdots i_n})= \sigma(\theta^{l_1\cdots l_n})$,
then the elements $\sigma(\theta^{i_1\cdots \widehat{i_j}\cdots i_n })$ and $\sigma(\theta^{l_1\cdots \widehat{i_k}\cdots l_n })$ are $c_1$-neighbors of $\sigma(\theta^{i_1\cdots i_n})= \sigma(\theta^{l_1\cdots l_n})$. The symmetric difference of $\{i_1,\ldots,i_n\}-\{i_j\}$ and $\{l_1,\ldots,l_n\}-\{l_k\}$ consists of at least 3 elements: $i_\alpha$, $l_\beta$, $i_j$ and $l_k$ (where $i_j$ and $l_k$ may possibly be equal), and by Lemma \ref{lem_injec_diff} 
$\sigma(\theta^{i_1\cdots \widehat{i_j}\cdots i_n })$ and $\sigma(\theta^{l_1\cdots \widehat{i_k}\cdots l_n })$ differ from each other in at least 3 coordinates. But then by Proposition \ref{prop_nhd_intersect}, $\sigma(\theta^{i_1\cdots \widehat{i_j}\cdots i_n })$ and $\sigma(\theta^{l_1\cdots \widehat{i_k}\cdots l_n })$ can not share a common $c_1$-neighbor.
\end{proof}
%\todo{Curtis: The word 'unique' and the logic seems wrong or at least incomplete. It is correct that $\sigma(\theta)$ is ‚Äúuniquely determined‚Äù but that is not the same as saying these elements are 'distinct', as would be required. I think there is a correct argument hiding here, but the proof should be rewritten with more accurate statements.}

%
Now we show that for an injective cube $\sigma$, the restriction to $S_0 \cup S_1$ completely determines all other values of $\sigma$.
\begin{lem}\label{cube_unique}
Let $\sigma$ be a singular $q$-cube with $q\ge 2$. If $\sigma$ is injective, then all values of $\sigma$ are uniquely determined by its values on $S_0\cup S_1$.
\end{lem}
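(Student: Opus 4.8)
The plan is to prove, by induction on $n$ with $0\le n\le q$, that the restriction of $\sigma$ to $S_n$ is uniquely determined by the restriction of $\sigma$ to $S_0\cup S_1$; the cases $n=0,1$ are vacuous, so all of the content is in the inductive step. The structural input is Proposition~\ref{prop_uv_break}, equivalently Remark~\ref{rem_theta_sig}: for $n\ge 2$ each $\theta^{i_1i_2\cdots i_n}\in S_n$ is a common $c_1$-neighbor of the $n$ elements $\theta^{i_1i_2\cdots\widehat{i_j}\cdots i_n}\in S_{n-1}$, so $\sigma(\theta^{i_1i_2\cdots i_n})$ lies in $\bigcap_{j=1}^n\Nof\big(\sigma(\theta^{i_1i_2\cdots\widehat{i_j}\cdots i_n})\big)$. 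Since $\sigma$ is injective, $\sigma(\theta^{i_1i_2\cdots i_n})$ is distinct from each $\sigma(\theta^{i_1i_2\cdots\widehat{i_j}\cdots i_n})$, so it is in fact a genuine common $c_1$-\emph{neighbor} of these $n$ distinct points of $X$.

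For $n\ge 3$ this closes the induction at once: by Corollary~\ref{cor_grt_thn_3}, any $n\ge 3$ distinct points have at most one common $c_1$-neighbor, so $\sigma(\theta^{i_1i_2\cdots i_n})$ is \emph{the} common $c_1$-neighbor of the points $\sigma(\theta^{i_1i_2\cdots\widehat{i_j}\cdots i_n})$ for $1\le j\le n$, hence it is determined by the values of $\sigma$ on $S_{n-1}$, which by the inductive hypothesis are determined by the values on $S_0\cup S_1$.

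The main obstacle is the base case $n=2$, where Corollary~\ref{cor_grt_thn_3} does not apply, as two distinct points may have two common $c_1$-neighbors. I would handle this by bringing in the extra point $\sigma(\0)$. Fix distinct indices $i,j$ and write $a=\sigma(\theta^i)$, $b=\sigma(\theta^j)$, $c=\sigma(\0)$. Since $\0$ is $c_1$-adjacent to both $\theta^i$ and $\theta^j$ and $\sigma$ is injective, $c$ is a common $c_1$-neighbor of $a$ and $b$; likewise, since $\theta^{ij}$ is $c_1$-adjacent to both $\theta^i$ and $\theta^j$, the point $\sigma(\theta^{ij})$ is a common $c_1$-neighbor of $a$ and $b$; and $\sigma(\theta^{ij})\ne c$ by injectivity. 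By Corollary~\ref{cor_comn_nbrs}(i) the set of common $c_1$-neighbors of $a$ and $b$ has at most two elements, hence it equals $\{c,\sigma(\theta^{ij})\}$. Thus $\sigma(\theta^{ij})$ is forced to be the unique common $c_1$-neighbor of $\sigma(\theta^i)$ and $\sigma(\theta^j)$ that differs from $\sigma(\0)$, and so is determined by the values of $\sigma$ on $S_0\cup S_1$. If one wants this in closed form, Proposition~\ref{prop_nhd_intersect} then forces $a$ and $b$ to differ by $1$ unit in exactly two coordinates, and a short computation identifies the two common neighbors as $c$ and $a+b-c$, giving $\sigma(\theta^{ij})=\sigma(\theta^i)+\sigma(\theta^j)-\sigma(\0)$.

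With the base case settled, the induction runs through $n=3,\dots,q$ by the argument above, yielding the claim. The only routine points along the way are the repeated uses of injectivity together with continuity of $\sigma$ along single edges of $I^q$ that upgrade ``equal or $c_1$-adjacent'' to ``$c_1$-adjacent''. The genuinely delicate step is the base case $n=2$, which is where all the real content of the lemma sits.
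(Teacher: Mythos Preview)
Your proof is correct and follows the same inductive strategy as the paper: show that the values of $\sigma$ on $S_0\cup\cdots\cup S_n$ determine those on $S_{n+1}$ via common-neighbor constraints. The paper's argument is terser---it simply asserts that $\sigma(x)$, being a common neighbor of $n+1\ge 2$ distinct points, is uniquely determined---which is fine for $n+1\ge 3$ by Corollary~\ref{cor_grt_thn_3} but glosses over the case $n+1=2$, where two distinct points can have two common $c_1$-neighbors. Your explicit treatment of the $S_2$ case, using $\sigma(\0)$ to single out $\sigma(\theta^{ij})$ as the \emph{other} common neighbor of $\sigma(\theta^i)$ and $\sigma(\theta^j)$, fills exactly that gap and makes the argument complete; the closed-form identity $\sigma(\theta^{ij})=\sigma(\theta^i)+\sigma(\theta^j)-\sigma(\0)$ is a nice bonus.
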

\begin{proof}

Let $\sigma$ be injective. We prove the Lemma by showing that, if the values of $\sigma$ on $S_0\cup \dots \cup S_n$ are given for some $n\ge 1$, then these values will uniquely determine any value of $\sigma$ on $S_{n+1}$. 

For $n=1$, we must show that the values of $\sigma$ on $S_2$ are uniquely determined by the values of $\sigma$ on $S_0 \cup S_1$. Take some $x = \theta^{i_1i_2} \in S_2$. Then $\sigma(x)$ is a common neighbor of $\sigma(\theta^{i_1})$ and $\sigma(\theta^{i_2})$, which are distinct because $\sigma$ is injective. The point $\sigma(\0)$ is also a common neighbor of $\sigma(\theta^{i_1})$ and $\sigma(\theta^{i_2})$. by Corollary \ref{cor_comn_nbrs} there are at most two common neighbors of $\sigma(\theta^{i_1})$ and $\sigma(\theta^{i_2})$. And because $\sigma$ is injective, $\sigma(x) \neq \sigma(\0)$ and thus $\sigma(x)$ is the unique common neighbor of $\sigma(\theta^{i_1})$ and $\sigma(\theta^{i_2})$ different from $\sigma(\0)$. Thus the value of $\sigma(x)$ is determined uniquely by the values of $\sigma$ on $S_0 \cup S_1$.

Now we consider the case $n\ge 2$. Assume that the values of $\sigma$ on $S_0\cup \dots \cup S_n$ are specified, and take some point $x = \theta^{i_1\dots i_n i_{n+1}} \in S_{n+1}$. Then $\sigma(x)$ is a common neighbor of every element of the form $\sigma(\theta^{i_1\dots \hat i_k \dots i_{n+1}})$. Since $\sigma$ is injective, $\sigma(x)$ is a common neighbor of $n+1 \ge 3$ points and so the value $\sigma(x)$ is uniquely determined.
%\todo{Curtis:Again, there is a correct proof here but the last sentence does not express it correctly. As I understand, the logic is that $\sigma(x)$ can be one of two values, but injectivity rules out one of those values, hence the other one is uniquely determined.}
\end{proof}

\section{Cube automorphisms and injective singular cubes}\label{sec_cubeauto}
Much of our work will be based fundamentally on the well-known structure of the automorphism group of the cube $I^q$, which is called the \emph{hyperoctahedral group}. See a basic description in \cite{baake}. We will summarize the relevant details.

Given any permutation $\pi$ in the symmetric group $\Sigma_q$, there is a natural automorphism $r_\pi: I^q \to I^q$ given by $r_\pi(t_1,\dots t_q) = (t_{\pi(1)},\dots, t_{\pi(q)})$. 
%Such maps are called \emph{rotations} of the cube $I^q$. 
For simplicity we will abuse notation and write the permutation $\pi$ itself as the map $\pi = r_\pi: I^q \to I^q$. We will focus particularly on the transposition $\tau_{ij} \in \Sigma_q$, defined by $\tau_{ij}(i)=j$ and $\tau_{ij}(j)=i$ and $\tau_{ij}(k)=k$ for any $k\not\in\{i,j\}$. 

The other type of fundamental symmetry of the cube is expressed by axis reflections: given some $i\in \{1,\dots,q\}$, the reflection $T_i:I^q \to I^q$ is the map given by 
\[ T_i(t_1,\dots,t_q) = (t_1,\dots,t_{i-1},1-t_i,t_{i+1}, \dots, t_q). \] 
For convenience, we write $T_0$ as the identity map. 

More generally, let $\Z_2 = \{0,1\}$ be the group of integers modulo 2 with operation $\oplus$. Given any bit-vector $z = (z_1,\dots,z_q) \in \Z_2^q$, the \emph{translation} by $z$ is $T_z:I^q \to I^q$ given by $T_z(t_1,\dots t_q) = (t_1 \oplus z_1,\dots, t_q \oplus z_q)$. It is clear that if $z$ has nonzero coordinates in positions $k_1,\dots, k_n$, then $T_z = T_{k_1} \circ \dots \circ T_{k_n}$.

Occasionally, we will use two more combinations of the symmetries mentioned above. The composition  $T_i\circ \tau_{ij}=\tau_{ij}\circ T_j$,  is a one-quarter \emph{rotation} of the cube $I^q$ in the plane defined by $i$ and $j$.
Secondly, the ``shift'' permutation given for $i<j$ as: 
\[S_{ij}=\tau_{i,i+1}\circ\cdots\circ\tau_{j-2,j-1}\circ\tau_{j-1,j}\]
which takes the coordinate $j$ to the position of coordinate $i$.
%\[ S_{ij}(t_1,\dots,t_q) = (t_1,\dots, t_{i-1}, t_j, t_i, \dots, t_{j-1}, t_{j+1}, \dots, t_q). \]

If $\alpha:I^q \to I^q$ is a continuous bijection with continuous inverse, we call it a \emph{cube automorphism}. We write the set of all cube automorphisms on $I^q$ as $\Aut(I^q)$. The fundamental characterization of cube automorphisms in terms of permutations and translations can be described as follows:
%\footnote{for now, see \url{http://ktiml.mff.cuni.cz/~gregor/hypercube/hypercubes_lec2.pdf} and \url{http://ktiml.mff.cuni.cz/~gregor/hypercube/hypercubes_lec3.pdf}. Still looking for a better reference}:
\begin{thm}\label{hyperoctahedralthm}
Let $\alpha:I^q \to I^q$ be a cube automorphism. Then there are unique $\pi\in \Sigma_n$ and $z\in \Z_2^q$ with $\alpha(t) =z \oplus \pi(t)$.
\end{thm}

%When convenient, we will also make use of an equivalent description of the hyperoctahedral group in terms of signed permutation matrices: that is, matrices having a single nonzero entry of $1$ or $-1$ in each row and column. Any automorphism $\alpha:I^q \to I^q$ can be expressed as $\alpha(z)=\alpha(0) + Az$, where $A$ is a $q\times q$ signed permutation matrix.

%The two approaches outlined above are interchangeable as follows: if $\alpha \in \Aut(I^q)$ has translation-permutation structure $\alpha = T_z \circ \pi \in \Z_2^q \rtimes \Sigma_q$, then the signed permutation matrix describing $\alpha$ has permutation structure given by $\pi$, and the sign in column $i$ is $(-1)^{z_i}$, where $z_i$ is the $i$th coordinate of $z$.

When $X$ is a digital image and $\sigma:I^q \to X$ is an injective digital cube, we can express $\sigma$ in terms of a cube automorphism $\alpha\in \Aut(I^q)$ as follows: We call a map $J: I^q \to X \subset \Z^n$ an \emph{affine embedding} when we have $J(t) = Mt + x$ for some $x\in X$, where $M$ is a $n\times q$ matrix obtained by inserting $n-q$ rows of zeroes into the $q\times q$ identity matrix. (Equivalently, $M$ is obtained by deleting $n-q$ columns from the $n\times n$ identity matrix.) In this context we call $M$ an \emph{embedding matrix}.

Theorem \ref{hyperoctahedralthm} gives a standard form for cube automorphisms, which can be used to obtain a similar form for any injective singular cube $\sigma:I^q \to X$. First we discuss some properties of injective cubes in general:

\begin{Def}
For a singular $q$-cube $\sigma:I^q \to X$, we say $\sigma$ is an \emph{embedding} if $\sigma(I^q)$ is an elementary $q$-cube in $X$. 
\end{Def}

The term ``embedding'' is used because an embedding is an isomorphism onto its image:
\begin{thm}\label{emb_is_isom}
If $\sigma:I^q \to X$ is an embedding, then $\sigma: I^q \to \sigma(I^q)$ is an isomorphism of digital images.
\end{thm}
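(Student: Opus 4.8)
The plan is to show that an embedding $\sigma:I^q \to \sigma(I^q)$ is a bijective continuous map whose inverse is also continuous; since a digital isomorphism is exactly a bijection that is continuous with continuous inverse, this suffices.

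First I would establish bijectivity. Surjectivity onto $\sigma(I^q)$ is immediate. For injectivity, the idea is to use Lemma \ref{lem_sig_inj}: it is enough to check that $\sigma$ restricted to $S_0 \cup S_1 \cup S_2$ is injective. Since $\sigma(I^q)$ is an elementary $q$-cube, say $J_1 \times \cdots \times J_n$ with exactly $q$ nondegenerate factors, $\sigma(I^q)$ has exactly $2^q$ points, while $|I^q| = 2^q$ as well. Here I would argue that a continuous surjection $I^q \to \sigma(I^q)$ between sets of equal cardinality must be a bijection — but this needs care, because a priori continuity does not force this for finite sets. The cleaner route: I would show directly, using continuity together with Proposition \ref{prop_nhd_intersect} and Remark \ref{rem_theta_sig}, that $\sigma$ cannot collapse two points of $S_0 \cup S_1 \cup S_2$, because $\sigma(\0)$ and the images $\sigma(\theta^i)$ must be distinct (they are mutually $c_1$-adjacent in the $c_1$-cube structure dictated by the target being an elementary cube, and an elementary $q$-cube has its ``origin'' vertex adjacent to exactly $q$ distinct vertices), and then the $S_2$ images are forced to be distinct by the common-neighbor count of Corollary \ref{cor_comn_nbrs}. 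Then Lemma \ref{lem_sig_inj} upgrades this to full injectivity, and combined with the cardinality count $|I^q| = |\sigma(I^q)| = 2^q$, $\sigma$ is a bijection.

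Next I would verify continuity of $\sigma^{-1}$. The key observation is the rigidity statement of Lemma \ref{cube_unique}: once $\sigma$ is injective, its values on $S_0 \cup S_1$ determine everything, and moreover $\sigma$ restricted to $S_0 \cup S_1$ is, up to relabeling and flipping coordinates, forced to send $\0$ to a vertex of the elementary cube and each $\theta^i$ to the neighbor of that vertex in one of the $q$ nondegenerate coordinate directions. This means $\sigma$ is (a composition of coordinate permutations and reflections with) the standard parametrization of the elementary cube, i.e. it is an affine-type map $(t_1,\dots,t_q) \mapsto (\dots, a_k \pm t_{\pi(k)}, \dots)$. Such a map visibly has continuous inverse, since its inverse is of the same form. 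Alternatively, and more robustly, I would show that if $x, y \in I^q$ have $\sigma(x)$ and $\sigma(y)$ $c_1$-adjacent in $\sigma(I^q)$, then $x$ and $y$ are $c_1$-adjacent in $I^q$: this follows because $x$ and $y$ must then differ, and by injectivity and the structure just described, the number of coordinates in which $x$ and $y$ differ equals the number in which $\sigma(x)$ and $\sigma(y)$ differ, which is $1$.

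The main obstacle I anticipate is making rigorous the claim that $\sigma|_{S_0 \cup S_1}$ is forced into the ``standard'' form — that is, ruling out degenerate behavior on low-dimensional skeleta purely from continuity and the hypothesis that the \emph{image} is an elementary $q$-cube. The cardinality argument ($2^q$ points in each) does a lot of the work, but one must still exclude, e.g., $\sigma$ mapping two adjacent vertices of $I^q$ to the same point while mapping two non-adjacent vertices to distinct adjacent points of the cube; this is where Corollary \ref{cor_comn_nbrs}(ii) and the bound $|\NZof(x,y)| \le 2$ are essential, since in an elementary cube two adjacent vertices share no common deleted neighbor, forcing the preimage adjacency structure to match. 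Once that rigidity is in hand, both continuity of $\sigma^{-1}$ and the conclusion that $\sigma$ is a digital isomorphism follow directly.
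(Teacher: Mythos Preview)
Your proposal is correct but follows a different route from the paper. The paper's proof is three lines: since $\sigma(I^q)$ is an elementary $q$-cube, there is a canonical isomorphism $i:\sigma(I^q)\to I^q$; then $i\circ\sigma:I^q\to I^q$ is continuous and bijective (by the cardinality count $|I^q|=|\sigma(I^q)|=2^q$); and an external result, Lemma~2.3 of \cite{hmps}, says that any continuous bijection between finite digital images is an isomorphism. You instead prove continuity of $\sigma^{-1}$ internally, by showing that $\sigma$ must coincide with the affine parametrization of the elementary cube agreeing with it on $S_0\cup S_1$ (via Lemma~\ref{cube_unique}), and observing that such an affine map has an inverse of the same form. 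This is exactly the mechanism the paper uses in the \emph{next} theorem (Theorem~\ref{thm_inj_is_emb}), so your argument is sound and self-contained, at the cost of some length; the paper's version is shorter but leans on an outside reference.

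One correction to your write-up: your detour through Lemma~\ref{lem_sig_inj} for injectivity is unnecessary, and your worry that the cardinality argument ``needs care'' is misplaced. A surjection between finite sets of equal cardinality is a bijection, full stop---continuity plays no role there. So bijectivity is immediate from $|\sigma(I^q)|=2^q$, and the only real content is continuity of the inverse, which both approaches then address in their respective ways.
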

\begin{proof}
If $\sigma:I^q \to X$ is an embedding, then $\sigma(I^q)$ is an elementary $q$-cube, and so $\sigma(I^q)$ is naturally isomorphic (as a digital image) to $I^q$. Let $i:\sigma(I^q) \to I^q$ be this isomorphism. Then $\sigma \circ i: I^q \to I^q$ is continuous and injective (and thus bijective). By Lemma 2.3 of \cite{hmps}, $\sigma \circ i$ is an isomorphism, and thus $\sigma$ is an isomorphism.
\end{proof}

Then we can easily obtain:
\begin{thm}\label{thm_inj_is_emb}
If $\sigma:I^q \to X$ is injective, then $\sigma$ is an embedding. 
\end{thm}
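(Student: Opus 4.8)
The plan is to pin down the image $\sigma(I^q)$ completely. We may assume $q\geq 2$, since the cases $q\leq 1$ are immediate (the image of an injective $q$-cube is a single point, or a pair of $c_1$-adjacent points). Write $p=\sigma(\0)$ and let $e_1,\dots,e_n$ denote the standard basis of $\ZZ^n$. Since $\sigma$ is continuous, each $\sigma(\theta^i)$ is $c_1$-adjacent to $p$, so $\sigma(\theta^i)=p+\epsilon_i e_{d_i}$ for a coordinate direction $d_i\in\{1,\dots,n\}$ and a sign $\epsilon_i\in\{+1,-1\}$. The first step is to show that $d_1,\dots,d_q$ are pairwise distinct: if $d_i=d_j$ for some $i\neq j$, then injectivity gives $\sigma(\theta^i)\neq\sigma(\theta^j)$, forcing $\epsilon_i=-\epsilon_j$, so $\sigma(\theta^i)$ and $\sigma(\theta^j)$ differ by $2$ units in coordinate $d_i$; their only common $c_1$-neighbor in $\ZZ^n$ is then the midpoint $p$ (as in the proof of Proposition \ref{prop_nhd_intersect}), and since these two points are non-adjacent, Remark \ref{rem_theta_sig} forces $\sigma(\theta^{ij})=p=\sigma(\0)$, contradicting injectivity.

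With the $d_i$ distinct, set $Q=J_1\times\cdots\times J_n$, where $J_{d_i}=\{p_{d_i},\,p_{d_i}+\epsilon_i\}$ is nondegenerate for each $i$ and $J_k=\{p_k\}$ is degenerate for $k\notin\{d_1,\dots,d_q\}$; then $Q$ is an elementary $q$-cube. I claim $\sigma(I^q)=Q$, obtained by refining the inductive argument behind Lemma \ref{cube_unique} so as to track which values are selected: I show by induction on $n$ that
\[
\sigma\bigl(\theta^{i_1\cdots i_n}\bigr)=p+\sum_{k=1}^{n}\epsilon_{i_k}e_{d_{i_k}}
\]
for every choice of indices. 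The cases $n=0,1$ hold by construction. For the step, put $v^{(k)}:=\sigma\bigl(\theta^{i_1\cdots\widehat{i_k}\cdots i_n}\bigr)$, which by the inductive hypothesis equals $p+\sum_{l\neq k}\epsilon_{i_l}e_{d_{i_l}}$; these $v^{(k)}$ are pairwise distinct and pairwise non-adjacent (any two differ in two coordinates, as the $d_i$ are distinct). By Remark \ref{rem_theta_sig}, $\sigma(\theta^{i_1\cdots i_n})$ is a common $c_1$-neighbor of all the $v^{(k)}$. When $n\geq 3$, Corollary \ref{cor_grt_thn_3} shows the $v^{(k)}$ have at most one common $c_1$-neighbor in $\ZZ^n$, and $p+\sum_k\epsilon_{i_k}e_{d_{i_k}}$ is directly checked to be one, so it is the unique one and hence equals $\sigma(\theta^{i_1\cdots i_n})$. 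When $n=2$, Corollary \ref{cor_comn_nbrs} shows $v^{(1)},v^{(2)}$ have at most two common $c_1$-neighbors, namely the two distinct points $p$ and $p+\epsilon_{i_1}e_{d_{i_1}}+\epsilon_{i_2}e_{d_{i_2}}$; since $\sigma(\theta^{i_1 i_2})$ is one of these but, by injectivity, is not $\sigma(\0)=p$, it is the other, as claimed.

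From the displayed identity, writing $T\subseteq\{1,\dots,q\}$ for the set of coordinates equal to $1$ in a point of $I^q$, we get $\sigma(I^q)=\{\,p+\sum_{i\in T}\epsilon_i e_{d_i}:T\subseteq\{1,\dots,q\}\,\}=Q$, an elementary $q$-cube in $X$; hence $\sigma$ is an embedding. I expect the main obstacle to be the inductive step of the second paragraph: one must verify that the intended value $p+\sum_k\epsilon_{i_k}e_{d_{i_k}}$ is genuinely always among the (at most two) common $c_1$-neighbors provided by Corollaries \ref{cor_grt_thn_3} and \ref{cor_comn_nbrs}, and that injectivity alone suffices to discard the spurious candidate $p$ in the two-dimensional case — together with the preliminary verification that the directions $d_1,\dots,d_q$ are pairwise distinct.
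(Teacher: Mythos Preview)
Your proof is correct and follows the same underlying idea as the paper's: an injective $q$-cube is determined by its values on $S_0\cup S_1$, and those values force it to be the obvious affine-linear map onto an elementary cube. The paper's version is terser---it simply invokes Lemma~\ref{cube_unique} and asserts that ``the affine linear map which sends $V$ to $\sigma(V)$'' is an embedding agreeing with $\sigma$ on $V$---whereas you re-run the inductive argument of Lemma~\ref{cube_unique} while tracking the actual values, and you explicitly verify that the directions $d_1,\dots,d_q$ are pairwise distinct. That last verification is a genuine detail the paper glosses over (its asserted affine embedding only exists once you know the $d_i$ are distinct), so your version is in fact the more complete of the two; the price is that you do not get to cite Lemma~\ref{cube_unique} as a black box and must instead reprove its content.
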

\begin{proof}
Let $\sigma$ be injective. Then by Lemma \ref{cube_unique}, $\sigma$ is uniquely determined by its values on the set $V = S_0 \cup S_1 = \{0, \theta^1, \dots, \theta^q\}$. There is an embedding which agrees with $\sigma$ on $V$: the affine linear map which sends $V$ to $\sigma(V)$. Since $\sigma$ is uniquely determined by its values on $V$, we conclude that $\sigma$ is this affine linear map, which is an embedding.
\end{proof}

Now we can derive our standard form for injective cubes in terms of the hyperoctahedral group.

\begin{thm}\label{singularhyperoctahedral}
Let $\sigma: I^q \to X$ be an injective cube. Then there are unique $z\in \Z^q_2$ and $\pi\in \Sigma_q$, and a unique embedding matrix $M$ such that
\[ \sigma(t) = M(z\oplus \pi(t)) + x, \]
where $x = \sigma(0)-Mz$ is the lexicographical minimum element of $\sigma(I^q)$.
\end{thm}
\begin{proof}
By Theorem \ref{thm_inj_is_emb},  the set $\sigma(I^q)$ is an elementary $q$-cube. Then there is a unique affine embedding $J(t) = Mt + x$ with $\sigma(I^q) = J(I^q)$. Specifically, $M$ is the $n\times q$ matrix with columns $e_{k_i} \in \Z^n$ where $1 \le k_1<\dots<k_q\le n$ and $\{k_1,\dots,k_q\}$ are the coordinates in which the elementary $q$-cube $\sigma(I^q)$ is nondegenerate. In this case the transpose $M^T$ acts like an inverse to $M$ in that $M^TM$ is the $q\times q$ identity matrix, and $MM^T$ is the $n\times n$ identity matrix with column $i$ replaced by 0 whenever $k\not\in \{k_1,\dots,k_q\}$.

For $t\in I^q$, let $\rho$ be defined by $\rho(t) = M^T(\sigma(t)-x)$. By the above, we have $\rho(t)\in I^q$ for all $t$, and so $\rho$ is a cube automorphism $\rho:I^q\to I^q$. By Theorem \ref{hyperoctahedralthm}, $\rho$ is expressed uniquely as $\rho(t) = z \oplus \pi(t)$ for some $z\in\Z^q_2$ and $\pi\in \Sigma_q$. Then we have:
\[ M^T(\sigma(t)-x) = z \oplus \pi(t), \]
and so
\[ MM^T(\sigma(t)-x) = M(z\oplus \pi(t)). \]
By the particular structure of $MM^T$, and since $\sigma(t)-x$ has nonzero entries only in coordinates $k_1,\dots,k_q$, we have $MM^T(\sigma(t)-x)= \sigma(t)-x$, and rearranging above gives $\sigma(t) = M(z\oplus \pi(t)) + x$ as desired.
\end{proof}

%\begin{lem}\label{x_zero}
%Let $\sigma:I^q \to X$ be an injective cube with $\sigma(t) = M(T_z(\pi(t))) + x$ for some $z\in \Z^q$ and $\pi\in \Sigma_q$ and $x\in X$. Then $x = \sigma(0) - Mz$.
%\end{lem}
%\begin{proof}
%Because $T_z(\pi(0)) = T_z(0) = z$, we have $\sigma(0) = Mz + x$ as desired.
%\end{proof}

For the next lemma we will introduce a specific construction on permutations. Given $\pi\in S_n$, viewing $\pi$ as a permutation of the set $\{1,\dots,n\}$, and for some chosen $i\in \{1,\dots,n\}$, 
we will define a permutation $\pi'\in S_{n-1}$ to be the permutation obtained by deleting the $i$th entry of $\pi$, and then relabeling the remaining entries as $1$ through $n-1$, preserving their order. Homberger \cite{homberger} calls this the \emph{deletion} of entry $i$, his notation is $\pi' = \del(\pi,i)$. In other papers this operation is called ``normalization'' or ``standardization'' of the permutation.

%Let $\omega_k:\{1,\dots,n\} \to \{1,\dots,n-1\}$ be the function which omits $k$ from the domain. That is:
%\[ \omega_k(i) = \begin{cases} i &$ if $ i\le k, \\
%i-1 &$ if $ i>k. \end{cases} \]
%Then we define $\pi'$ as the permutation satisfying $\omega_k \circ \pi' = \pi \circ \omega_{\pi(k)}$. It is straightforward to show that this formula uniquely defines $\pi'\in S_{n-1}$. We call $\pi'$ \emph{the permutation induced from $\pi$ by omitting $k$}. 

Specifically we can describe $\pi'$ as follows:
\[
\pi'(k) = \begin{cases}
\pi(k) &$ if $ k<i $ and $ \pi(k)<\pi(i), \\
\pi(k+1) &$ if $ k\ge i$ and $\pi(k)<\pi(i), \\
\pi(k) - 1 &$ if $ k<i$ and $\pi(k)\ge \pi(i), \\
\pi(k+1)-1 &$ if $ k\ge i$ and $\pi(k)\ge \pi(i).
\end{cases}
\]

The parity of $\pi'$ is related to the parity of $\pi$ as follows:
Let $c, d \in \Sigma_n$ be the permutations with cycle notation $c = (i (i+1) \dots n)$ and $d = (n(n-1)\dots \pi(i))$. Using the formula in cases above, it is straightforward to show that $d \circ \pi \circ c(n)=n$ and $d\circ \pi \circ c(k) = \pi'(k)$ for all $k<n$. Thus the cycle structure of $\pi'$ is the same as that of $d\circ \pi \circ c$. In particular these permutations have the same parity, so we have:
\begin{equation}\label{delparity}
(-1)^{\pi'} =  (-1)^d (-1)^c (-1)^\pi = (-1)^{n-i} (-1)^{n-\pi(i)} (-1)^\pi = (-1)^{i+\pi(i)} (-1)^\pi
\end{equation}

Theorem \ref{singularhyperoctahedral} can be used to evaluate $\sigma$ specifically in coordinates. Below, we will use the following simple fact for $a,b\in \Z_2$:
\begin{equation}\label{opluseq} 
a \oplus b = a + b(-1)^a,
\end{equation}
and we write $e_i\in \Z^n$ as the $i$th standard basis vector.

\begin{lem}\label{sigmacoordinates}
Let $\sigma:I^q \to X$ be an injective cube with $\sigma(t) = M(z\oplus \pi(t)) + x$ for some $z\in \Z^q$ and $\pi\in \Sigma_q$ and $x\in X$. Let $t\in I^q$ be written in coordinates as $t = \sum_{k=1}^q t_ke_k$. Then we have:
\[ \sigma(t) = \sigma(0) + \sum_{k=1}^q (-1)^{z_k}t_{\pi^{-1}(k)} Me_k. \]
In particular, when $t=e_i$, the only nonzero term of the sum is when $k=\pi(i)$, and so we have
\[ \sigma(e_i) = \sigma(0) + (-1)^{z_{\pi(i)}} M e_{\pi(i)}. \]
\end{lem}
\begin{proof}
Recall from Theorem \ref{hyperoctahedralthm} that $x = \sigma(0)-Mz$. 

Now we evaluate:
\[ 
\begin{split}
\sigma(t) &= M(z\oplus \pi(t)) + x = M\left( \sum_{k=1}^q (z_k \oplus \pi(t)_k) e_k \right) + \sigma(0) - Mz \\
&= M\left( \sum_{k=1}^q (z_k \oplus t_{\pi^{-1}(k)}) e_k \right) + \sigma(0) - M\left( \sum_{k=1}^q z_ke_k\right) \\
&= M\left( \sum_{k=1}^q ((z_k \oplus t_{\pi^{-1}(k)}) - z_k) e_k\right) + \sigma(0) \\
&= \sigma(0) + \sum_{k=1}^q ((z_k \oplus t_{\pi^{-1}(k)}) - z_k) Me_k.
\end{split}
\]
and applying \eqref{opluseq} gives
\[ \sigma(t) = \sigma(0) + \sum_{k=1}^q (-1)^{z_k} t_{\pi^{-1}(k)} Me_k \]
as desired.
\end{proof}

The following lemma gives the precise structure of the faces of $\sigma$ in terms of the structure in Theorem \ref{singularhyperoctahedral}. 

\begin{lem}\label{faceperm}
Let $\sigma:I^q \to X$ be an injective cube with $\sigma(t) = M(z\oplus \pi(t)) + x$ for some $z\in \Z^q$ and $\pi\in \Sigma_q$ and $x\in X$. Then the faces of $\sigma$ have the form 
\[ \begin{split}
A_i\sigma(t) &= M'(z'\oplus \pi'(t)) + x', \text{and}\\
B_i\sigma(t) &= M'(z'\oplus \pi'(t)) + x' + (-1)^{z'_{\pi'(i)}} M'e_{\pi'(i)}, 
\end{split}
\] 
where $z'$ is obtained by deleting entry $i$ from $z$, the permutation $\pi'$ is the permutation induced from $\pi$ by omitting $\pi(i)$, the matrix $M'$ is obtained by deleting column $\pi(i)$ from $M$, and $x' = \sigma(0) - M'z'$.
\end{lem}
\begin{proof}
First we prove the statement for $A_i\sigma$. 
Let $\rho(t) = M'(z' \oplus\pi'(t)) + x'$, and we must show that $A_i\sigma = \rho$. By Lemma \ref{cube_unique}, it suffices to show that $A_i\sigma$ and $\rho$ agree on $S_0 \cup S_1$, that is, they agree when evaluated at 0 or at any standard basis vector $e_k$.

For evaluation at $0$, we have:
\[ \rho(0) = M'(z' \oplus \pi'(0)) + x' = M'z' + \sigma(0) - M'z' = \sigma(0) = A_i\sigma(0). \]

Now we show that $\rho(e_k) = A_i\sigma(e_k)$ for some standard basis vector $e_k$. We will assume that $k<i$, otherwise several $k$ should be replaced with $k+1$ below. By Lemma \ref{sigmacoordinates} we have:
\[  A_i \sigma(e_k) = \sigma(e_k) = \sigma(0) + (-1)^{z_{\pi(k)}} Me_{\pi(k)}, \]
and applying Lemma \ref{sigmacoordinates} to $\rho(t) = M'(z' \oplus\pi'(t)) + x'$, similarly gives:
\[ \rho(e_k) = \sigma(0) + (-1)^{z'_{\pi'(k)}} M'e_{\pi'(k)}. \]
Thus in order to show that $\rho(e_k) = A_i\sigma(e_k)$, we must show that $(-1)^{z_{\pi(k)}} Me_{\pi(k)} = (-1)^{z'_{\pi'(k)}} M'e_{\pi'(k)}$.

Since $z'$ arises from $z$ by deleting entry $\pi(k)$, and $\pi'$ is the permutation induced from $\pi$ by deleting $\pi(k)$, we will have $z_{\pi(k)} = z'_{\pi'(k)}$. Similarly since $M'$ arises from $M$ be deleting column $\pi(k)$, we have $Me_{\pi(k)} = M'e_{\pi'(k)}$ (these are each equal to column $\pi(k)$ of $M$). Thus we do indeed have $(-1)^{z_{\pi(k)}} Me_{\pi(k)} = (-1)^{z'_{\pi'(k)}} M'e_{\pi'(k)}$, and the result follows.

For $B_i\sigma$, we repeat the same arguments. Let: 
\[ \omega(t) = M'(z'\oplus \pi'(t)) + x' + (-1)^{z'_{\pi'(i)}}M' e_{\pi'(i)}, \]
and we will show $B_i\sigma = \omega$ by showing that they agree at $0$ and $e_k$.

For evaluation at $0$, we have $B_i\sigma(0) = \sigma(e_i)$, and so using Lemma \ref{sigmacoordinates} we have:
\[ B_i\sigma(0) = \sigma(e_i) = \sigma(0) + (-1)^{z_{\pi(i)}} Me_{\pi(i)}. \]
We also have:
\[ 
\begin{split}
\omega(0) &= M'z' + x' + (-1)^{z'_{\pi'(i)}}M' e_{\pi'(i)} \\
&= A_i\sigma (0) + (-1)^{z'_{\pi'(i)}}M' e_{\pi'(i)} 
= \sigma(0) + (-1)^{z'_{\pi'(i)}}M' e_{\pi'(i)} 
\end{split}
\]
and so $B_i\sigma(0) = \omega(0)$ as desired.

For evaluation at $e_k$, again we assume that $k<i$. We have: $B_i\sigma(e_k) = \sigma(e_i+e_k)$, and this can be computed by Theorem \ref{sigmacoordinates}. The only nonzero terms of the sum of Theorem \ref{sigmacoordinates} are terms $\pi^{-1}(i)$ and $\pi^{-1}(k)$, and we have:
\[ B_i\sigma(e_k) = \sigma(0) + (-1)^{z_{\pi(i)}}Me_{\pi(i)} + (-1)^{z_{\pi(k)}} Me_{\pi(k)}. \]
We also have:
\[ \omega(e_k) = M'(z'\oplus \pi'(e_k)) + x' + (-1)^{z'_{\pi'(i)}}M' e_{\pi'(i)} = A_i\sigma(e_k) + (-1)^{z'_{\pi'(i)}}M' e_{\pi'(i)}.\]
Applying Theorem \ref{sigmacoordinates} to compute $A_i\sigma(e_k)$ above gives:
\[ 
\begin{split}
\omega(e_k) &= A_i \sigma(0) + (-1)^{z'_{\pi'(k)}} M'e_{\pi'(k)} + (-1)^{z'_{\pi'(i)}}M' e_{\pi'(i)} \\
&= \sigma(0) + (-1)^{z_{\pi(k)}} Me_{\pi(k)} + (-1)^{z_{\pi(i)}}M e_{\pi(i)} = B_i\sigma(e_k) 
\end{split}
\]
as desired.
\end{proof}

The face operators behave predictably with respect to the maps $T_i$ and $\tau_{ij}$ as follows. The listed identities can all be easily shown by direct computation.
\begin{prop}\label{faceformulas}
Let $\sigma$ be a $q$-cube with $i\neq j$ and $k\not\in \{i,j\}$. Then:
\begin{align*}
A_k(\sigma\circ T_i) &=
\begin{cases}
A_k\sigma \circ T_{i-1},& k<i\\
B_k\sigma, & k=i\\
A_k\sigma \circ T_{i},& k>i
\end{cases}, \\
B_k(\sigma\circ T_i) &=
\begin{cases}
B_k\sigma \circ T_{i-1},& k<i\\
A_k\sigma, & k=i\\
B_k\sigma \circ T_{i},& k>i
\end{cases}, \\
A_k(\sigma\circ \tau_{ij})& =\begin{cases}
A_k\sigma\circ \tau_{i-1\,j-1},&k<i<j\\
A_j\sigma\circ S_{i\,j-1},&k=i<j\\
A_k\sigma\circ \tau_{i\,j-1},&i<k<j\\
A_i\sigma\circ S_{j-1\,i},&i<k=j\\
A_k\sigma\circ \tau_{ij},&i<j<k
\end{cases},\\
B_k(\sigma\circ \tau_{ij}) &=\begin{cases}
B_k\sigma\circ \tau_{i-1\,j-1},&k<i<j\\
B_j\sigma\circ S_{i,j-1},&k=i<j\\
B_k\sigma\circ \tau_{i\,j-1},&i<k<j\\
B_i\sigma\circ S_{j-1,i},&i<k=j\\
B_k\sigma\circ \tau_{ij},&i<j<k
\end{cases}.
\end{align*}
\end{prop}

From the above we can easily compute:
\begin{prop}\label{boundaryflipswap}
Let $\sigma$ be a $q$-cube with $i<j$. Then:
\begin{align*} 
\pd_q(\sigma \circ T_i) &= 
\sum_{k<i}(-1)^k(A_k\sigma\circ T_{i-1}-B_k
\sigma\circ T_{i-1})+(-1)^i(B_i\sigma-A_i\sigma)
\\
&
\qquad +\sum_{k>i}^{q}(-1)^k(A_k\sigma\circ T_i -B_k
\sigma\circ T_{i}). \\
\partial_q(\sigma \circ \tau_{ij}) &= \sum_{k<i}(-1)^{k}(A_k\sigma\circ 
\tau_{i-1,j-1}-B_k\sigma\circ \tau_{i-1,j-1})
\\
&\qquad+(-1)^i(
A_j\sigma\circ S_{i,j-1}-B_j\sigma\circ S_{i,j-1})
\\
&\qquad+\sum_{i<k<j}(A_k\sigma\circ \tau_{i,j-1}-
B_k\sigma\circ \tau_{i,j-1})
\\
&\qquad+(-1)^j(A_i\sigma\circ 
S_{j-1,i}-B_i\sigma\circ S_{j-1,i})\\
&\qquad+\sum_{j<k}
(-1)^k(A_k\sigma\circ \tau_{i,j}
-B_k\sigma\circ \tau_{i,j}).
\end{align*}
\end{prop}

\section{Symmetries of compatible cubes, and noninjective $q$-cubes with at least one injective face}\label{sec_symmetries}
We say that singular $q$-cubes $\sigma$ and $\gamma$ are \emph{compatible} if
$\sigma(t_1,\ldots,t_q)$ is adjacent to
$\gamma(t_1,\ldots,t_q)$, for all $t_i\in\{0,1\}$.
Geometrically speaking, this means that the two $q$-cubes are situated in such a way that they can be ``concatenated'' into a $(q+1)$-cube whose opposite faces in some coordinate are $\sigma$ and $\gamma$.

In this section we prove some useful facts about compatible injective cubes, and also consider noninjective cubes with at least one injective face.

First, it is easy to see that when two injective cubes have the same image, they differ by an automorphism.

\begin{lem}\label{swapflipcirc}
Let $\sigma, \phi: I^q \to X$ be injective singular $q$-cubes in $X$ with $\sigma(I^q) = \phi(I^q)$. Then:
\[ \phi = \sigma \circ \alpha, \]
where $\alpha \in \Aut(I^q)$.
\end{lem}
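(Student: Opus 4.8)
The plan is to reduce everything to the previous two lemmas. Since $\sigma$ and $\phi$ are injective digital $q$-cubes with $\sigma(I^q) = \phi(I^q)$, by Theorem \ref{thm_inj_is_emb} both are embeddings, so $\sigma(I^q)$ is an elementary $q$-cube $E$, and $\sigma: I^q \to E$ and $\phi: I^q \to E$ are both isomorphisms of digital images. The idea is to form the composite $G = \sigma^{-1} \circ \phi : I^q \to I^q$. This is a composition of isomorphisms of digital images, hence an isomorphism of $I^q$ onto itself, and in particular a continuous injection $I^q \to I^q$. Then $\sigma \circ G = \sigma \circ \sigma^{-1} \circ \phi = \phi$, which is the desired identity, so it only remains to express $G$ as a composition of swaps and flips.

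That last point is exactly Lemma \ref{swapflip}: any injective digital $q$-cube $I^q \to I^q$ is a composition of swaps $C_{i,j}$ and flips $F_i$. So once $G$ is known to be an injective self-map of $I^q$, we are done by applying Lemma \ref{swapflip} to $G$. The only genuine content beyond bookkeeping is making sure $\sigma^{-1}$ makes sense and is continuous, i.e. that an injective (equivalently, embedding) cube $\sigma$ really does have a continuous inverse on its image — but this is precisely the statement that an embedding is an isomorphism onto its image, which is the theorem immediately preceding Theorem \ref{thm_inj_is_emb} in the excerpt. Alternatively, one can avoid inverses entirely: identify $E = \sigma(I^q) = \phi(I^q)$ with $I^q$ via the natural isomorphism $i: E \to I^q$ coming from the elementary-cube structure, set $\sigma' = i \circ \sigma$ and $\phi' = i \circ \phi$, which are injective self-maps of $I^q$, and observe that $G := (\sigma')^{-1}\circ \phi'$ is again an injective self-map of $I^q$ — here $(\sigma')^{-1}$ exists because a continuous bijection $I^q \to I^q$ is an isomorphism (Lemma 2.3 of \cite{hmps}, already invoked in the excerpt). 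Then $\sigma \circ G = \sigma \circ \sigma^{-1} \circ \phi = \phi$, using $\sigma^{-1} = (\sigma')^{-1} \circ i$.

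I expect no serious obstacle here; the statement is essentially a formal consequence of the preceding results. The one place to be careful is simply confirming that $\sigma$, as a map onto the elementary cube $\sigma(I^q)$, is invertible with continuous inverse, so that the composite $G$ is a bona fide injective digital $q$-cube $I^q \to I^q$ eligible for Lemma \ref{swapflip}; the cleanest way to phrase this is to cite the theorem that an embedding is an isomorphism onto its image, then invoke Lemma \ref{swapflip}. So the write-up is short: (1) $\sigma,\phi$ are embeddings with common image $E$; (2) $\sigma^{-1}\colon E \to I^q$ is an isomorphism, so $G := \sigma^{-1}\circ\phi\colon I^q\to I^q$ is an injective digital $q$-cube; (3) by Lemma \ref{swapflip}, $G$ is a composition of swaps and flips; (4) $\sigma\circ G = \phi$.
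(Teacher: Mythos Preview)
Your proposal is correct and follows essentially the same approach as the paper: reduce to Lemma \ref{swapflip} by passing through an isomorphism with $I^q$. The paper uses the canonical isomorphism $\tau:Q\to I^q$ and writes $\phi = \sigma \circ (\tau\circ\sigma)^{-1}\circ(\tau\circ\phi)$, applying Lemma \ref{swapflip} to each factor; your version applies Lemma \ref{swapflip} once to the single composite $G=\sigma^{-1}\circ\phi$, which is a slight streamlining but not a substantive difference.
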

\begin{proof}
Since both $\phi$ and $\sigma$ are injective, both are embeddings by Theorem \ref{thm_inj_is_emb}, and thus are isomorphisms onto their image by Theorem \ref{emb_is_isom}. Since they have the same image, we may define $\alpha = \sigma^{-1} \circ \phi$, which will be an automorphism of $I^q$ satisfying $\phi = \sigma \circ \alpha$.
\end{proof}

When two injective cubes are compatible and have equal images, they can differ only by very specific types of automorphism.
\begin{lem}\label{lem_compatible_embeddings}
Let $\sigma, \phi$ be injective $q$-cubes which are compatible, with $\sigma(I^q)=\phi(I^q)$. Then one of the following holds:
\begin{enumerate}
\item\label{equal} $\phi = \sigma,$
\item\label{F} $\phi = \sigma \circ T_j$ for some $j$, and
\item\label{FC} $\phi = \sigma \circ T_j \circ \tau_{ij}$ for some $i\neq j$.
%\item\label{FCC} $\phi = \sigma \circ F_j \circ C_{i,j} \circ C_{j,k}$ for some distinct $i,j,k$.
\end{enumerate}
\end{lem}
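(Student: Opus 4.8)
The plan is to reduce the statement to a purely combinatorial question about self-maps of $I^q$ and then settle that question by inspecting only the $q+1$ vertices of $S_0\cup S_1$.

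\emph{Reduction.} Since $\sigma(I^q)=\phi(I^q)$, Lemma~\ref{swapflipcirc} gives $\phi=\sigma\circ G$ for some composition $G$ of swaps and flips. By Theorem~\ref{thm_inj_is_emb} the injective cube $\sigma$ is an isomorphism onto its image, so it preserves \emph{and} reflects $c_1$-adjacency; hence $\sigma$ and $\phi=\sigma\circ G$ are compatible if and only if $\id_{I^q}$ and $G$ are compatible, i.e.\ $x$ and $G(x)$ are equal or $c_1$-adjacent for every vertex $x\in I^q$. As two vertices of $I^q$ are $c_1$-adjacent exactly when they differ in one coordinate, the problem reduces to showing that a composition $G$ of swaps and flips with ``$G(x)$ and $x$ differ in at most one coordinate for all $x$'' must be one of $\id$, $F_j$, or $F_j\circ C_{i,j}$ with $i\neq j$; these correspond respectively to cases~\ref{equal}, \ref{F}, \ref{FC}.

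\emph{Normal form and case analysis.} Writing $\oplus$ for coordinatewise addition mod $2$ on $I^q=\{0,1\}^q$, every composition of swaps and flips is a signed coordinate permutation (each swap and each flip has this form, and such maps compose): there are a permutation $\pi$ of $\{1,\dots,q\}$ and a vertex $a=G(\0)$ with $G(\theta^i)=\theta^{\pi(i)}\oplus a$ for each $i$. By Lemma~\ref{cube_unique}, $G$ is determined by its values on $S_0\cup S_1=\{\0,\theta^1,\dots,\theta^q\}$, so it suffices to read $\pi$ and $a$ off from the compatibility condition at these vertices. Compatibility at $\0$ forces $a$ to have at most one nonzero coordinate, so $a=\0$ or $a=\theta^s$; compatibility at $\theta^i$ requires $\theta^{\pi(i)}\oplus a\oplus\theta^i$ to have at most one nonzero coordinate. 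If $a=\0$, this says $\theta^{\pi(i)}\oplus\theta^i$ has weight $\le 1$ for every $i$, which forces $\pi(i)=i$, so $G=\id$ (case~\ref{equal}). If $a=\theta^s$, then for $\pi(i)=i$ the expression equals $\theta^s$ (fine), while for $\pi(i)\neq i$ the vector $\theta^{\pi(i)}\oplus\theta^i$ has the two nonzero coordinates $i$ and $\pi(i)$, so the weight-$\le 1$ condition holds iff $s\in\{i,\pi(i)\}$; hence every non-fixed point of $\pi$ lies in $\{s,\pi^{-1}(s)\}$. If $\pi=\id$ then $G(x)=x\oplus\theta^s=F_s(x)$ (case~\ref{F}); otherwise the non-fixed points of $\pi$ form exactly the pair $\{s,j\}$ with $j=\pi^{-1}(s)\neq s$, so $\pi$ is the transposition $(s\,j)$ and $G(x)=F_s(C_{s,j}(x))=(F_s\circ C_{j,s})(x)$ by $C_{s,j}=C_{j,s}$ from Proposition~\ref{prop_FCRS}; this has the form in case~\ref{FC}. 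Finally, each of $\id$, $F_s$, $F_s\circ C_{s,j}$ visibly moves every vertex by at most one coordinate, so the list is exactly the compatible $G$.

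\emph{Main obstacle.} The only real content is the realization that the compatibility condition, a priori a constraint at all $2^q$ vertices of $I^q$, need only be tested at the $q+1$ vertices of $S_0\cup S_1$: by Lemma~\ref{cube_unique} these determine $G$, they already pin $G$ down to one of the three forms, and all three forms turn out to be globally compatible, so nothing is lost. The remaining care is bookkeeping: matching the ``transposition-then-flip'' normal form to the precise shape $F_j\circ C_{i,j}$ of the statement, which is exactly what the symmetry $C_{i,j}=C_{j,i}$ supplies.
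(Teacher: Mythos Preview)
Your proof is correct and takes a genuinely different route from the paper's.

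Both arguments begin with Lemma~\ref{swapflipcirc} to write $\phi=\sigma\circ G$. From there the paper normalizes $G$ as a block of flips followed by a block of swaps, then bounds each block separately: it picks the specific point $x=C^{-1}(\theta^{i_1\cdots i_n})$ to force at most one flip, argues that $C$ can move at most three basis vectors, and finally eliminates the residual $C_{i,j}\circ C_{j,k}$ case by computing $\phi(\theta^{ij})$ and deriving a contradiction. Your approach instead front-loads a clean reduction---using that $\sigma$ is an isomorphism onto its image to transfer the compatibility condition from $X$ back to $I^q$---and then exploits the $\mathbb F_2$-affine structure of $G$ (namely $G(x)=P(x)\oplus a$) to read everything off from the $q+1$ vertices of $S_0\cup S_1$. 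This replaces the paper's somewhat ad hoc case analysis (especially the 3-cycle elimination) with a uniform Hamming-weight computation, and it makes transparent why only a single flip and at most a single transposition can survive. The price is that you must verify at the end that the three surviving forms are \emph{globally} compatible (since you only tested $S_0\cup S_1$), which you do; the paper never needs this step because it works with the full compatibility hypothesis throughout. One cosmetic remark: your appeal to Lemma~\ref{cube_unique} is not really needed, since the affine description $G(x)=P(x)\oplus a$ already determines $G$ globally from $(\pi,a)$; the lemma is doing no work beyond what the ``signed coordinate permutation'' observation already provides.
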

\begin{proof}
By Lemma \ref{swapflipcirc}, we have $\phi = \sigma \circ \alpha$, where $\alpha\in \Aut(I^q)$. By Theorem \ref{hyperoctahedralthm}, we have $\phi = \sigma \circ T_z \circ \pi$ for $\pi \in \Sigma_q$ and $T_z \in \Z_2^q$. Let $i_1,\dots,i_n$ be the nonzero coordinates of $z$, possibly with $n=0$. We must show that $n\in \{0,1\}$, and that $\pi$ is either identity or a transposition of the required form in \ref{FC}. 

First we show that $n \in \{0,1\}$. Let $x = \pi^{-1}(z)$. Then: 
\[ 
\phi(x) = \sigma \circ T_z (z) = \sigma(\0). 
\]
Since $\sigma$ and $\phi$ are compatible, $\sigma(x)$ is $c_1$-adjacent to $\phi(x)=\sigma(\0)$. Since $\sigma$ is an embedding, this means $\0$ is $c_1$-adjacent to $x$, and so either $x=\0$, or $x=\theta^j$ for some $j$, and thus $z = \pi(x)$ is a point having at most $1$ nonzero coordinate. Thus $n\le 1$ as desired.

Thus far we have shown that 
\[ \phi = \sigma \circ T \circ \pi, \]
where $T$ is either identity or $T=T_j$ for some $j$, and $\pi$ is a permutation.

Next we will show that $\pi(k)=k$ for all except at most 3 values of $k$. To obtain a contradiction, assume that we have 4 distinct values $i,j,p,q$ which are not fixed by $\pi$. Since $\pi$ is a bijection, the image of a non-fixed point is also not fixed, so we can assume that $\pi(i) = j$ and $\pi(p)=q$. 

Since $\phi$ and $\sigma$ are compatible and $\pi(i)=\pi(j)$, we see that $\sigma(\theta^i)$ is $c_1$-adjacent to $\phi(\theta^i)=\sigma(T_z(\theta^j))$. Since $\sigma$ is an embedding, this means that $\theta^i$ is $c_1$-adjacent to $T_z(\theta^j)$. This is possible only when $z=\theta^k$ for some $k\in \{i,j\}$.

Applying the same reasoning as the above paragraph to $\pi(p)=\pi(q)$ gives $z=\theta^k$ for $k\in \{p,q\}$, which is a contradiction since $\{i,j\}$ is disjoint from $\{p,q\}$. Thus we conclude that  $\pi(k)=k$ for all except at most 3 values of $k$. This means that when we express $\pi$ as a composition of transpositions, $\pi$ must either be identity, or $\pi=\tau_{i,j}$ for some $i\neq j$, or $\pi = \tau_{i,j}\circ \tau_{j,k}$ for some distinct $i,j,k$. We finish the proof by considering these three cases individually, showing that the third is impossible.

If $\pi$ is the identity, then $\phi$ has one of the forms numbered \ref{equal} or \ref{F} in the Lemma.

If $\pi=\tau_{ij}$, then $\pi(i)=j$ and arguments above show that $z = \theta^k$ for some $k\in \{i,j\}$. If $k=j$, then $\phi = T_j \circ r_{\tau_{ij}}$ which is form \ref{FC}, and if $k=i$ then $\phi = T_i\circ r_{\tau_{i,j}}$ which is also form \ref{FC}.

%If $C=C_{i,j} \circ C_{j,k}$, then $C(\theta^i)=\theta^j$ and arguments above show that $F=F_k$ for some $k\in \{i,j\}$. Also we have $C(\theta^j)=\theta^k$, so the same arguments show $k\in \{j,k\}$. Since $i,j,k$ are distinct, we must have $k=j$, and so $\phi$ is form \ref{FCC}.

We will show that the remaining case, that $\pi = \tau_{ij}\circ \tau_{jk}$ for distinct $i,j,k$, is impossible. In this case we have $\pi(i)=j$ and arguments above show that $z=\theta^k$ for some $k\in \{i,j\}$. Also we have $\pi(j)=\pi(k)$, so the same arguments show that $k\in \{j,k\}$. Since $i,j,k$ are distinct, we must have $k=j$, and so $\phi$ has the form:
\[ \phi = \sigma \circ T_j \circ \tau_{ij} \circ \tau_{jk}. \]
We will show that the form above leads to a contradiction, rendering this case impossible.

We have:
\[ \phi(\theta^{ij}) =  \sigma \circ T_j \circ \tau_{ij} \circ \tau_{jk}(\theta^{ij}) = \sigma \circ T_j \circ \tau_{ij} (\theta^{ik}) = \sigma \circ T_j (\theta^{jk}) = \sigma(\theta^k) \]
Since $\phi$ and $\sigma$ are compatible, $\sigma(\theta^{ij})$ is $c_1$-adjacent to $\phi(\theta^{ij}) = \sigma(\theta^k)$. Since $\sigma$ is an embedding, $\sigma(\theta^{ij})$ being $c_1$-adjacent to $\sigma(\theta^k)$ implies that $\theta^{ij}$ is $c_1$-adjacent to $\theta^k$, which is a contradiction.
\end{proof}

%\section{Noninjective $q$-cubes with at least one injective face}\label{sec_Classifcation}

Proving the desired properties of our main chain map will require a detailed discussion of noninjective cubes having at least one injective face. For such cubes, we will show that the injective faces must occur in particular arrangements.
%\begin{comment}
%We first define the \emph{degree of injectivity} or simply 
%\emph{degree} of a digital $q$-cube. Informally speaking, the degree of injectivity of $\sigma$ is
%the maximal dimension of any injective subcube of $\sigma$. So an injective $q$-cube has degree $q$, but if $\sigma$ is noninjective and some face $A_i\sigma$ is injective, then $\sigma$ has degree $q-1$. If $\sigma$ and all faces $A_i\sigma$ and $B_i\sigma$ are noninjective, but some $A_iB_j\sigma$ is injective, then $\sigma$ has degree $q-2$, etc.
%
%\begin{Def}\label{def_degree}
%For a digital $q$-cube $\sigma$, we define the \emph{degree of injectivity}, or simply \emph{degree}, of $\sigma$ inductively as follows: if $\sigma$ is injective, then the degree of $\sigma$ is $q$. If $\sigma$ is noninjective, then the degree of $\sigma$ is the maximal degree of any face $A_i\sigma$ or $B_i\sigma$. 
%\end{Def}
%
%
%Clearly, a constant digital $q$-cube has degree of 
%injectivity $0$ and an injective digital $q$-cube has 
%degree $q$, while a noninjective digital $q$-cube, 
%with at least one face injective, has degree $q-1$.
%\end{comment}
We begin with two lemmas about a noninjective cube in which one face is injective but has different image from the opposite face.

\begin{lem}\label{biglemma}
Let $\sigma$ be a noninjective singular $q$-cube with $q\ge 2$, such that some $i$-face is injective, but the image of this face is not equal to the image of the opposite face.
Then there is some $j\neq i$ such that
$\sigma\circ T=\sigma\circ T\circ \tau_{ij}$, where $T \in \{\id,T_i\}$.
\end{lem}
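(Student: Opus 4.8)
Write $\sigma : I^q \to X$ for the cube in question, and suppose without loss of generality (after composing with a swap, which is harmless by Lemma~\ref{swapflip} and the formulas \eqref{eq_AkCij}) that the injective face is $A_q\sigma$, so $\phi := A_q\sigma$ is an injective $(q-1)$-cube and $\gamma := B_q\sigma$ satisfies $\phi(I^{q-1}) \neq \gamma(I^{q-1})$. Since $A_q\sigma$ and $B_q\sigma$ are corresponding faces of the $q$-cube $\sigma$, they are compatible in the sense of the definition preceding \eqref{apdboundary}, i.e.\ $\phi(t) $ is $c_1$-adjacent or equal to $\gamma(t)$ for every vertex $t \in \{0,1\}^{q-1}$; and in fact $\sigma = \phi \apd \gamma$. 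The plan is to extract the promised relation by comparing $\sigma$ with $\sigma \circ C_{i,q}$ for a suitable $i$, using compatibility of $\phi$ and $\gamma$ to pin down exactly how $\gamma$ can fail to be a reparametrization of $\phi$.

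**Key steps.**
First I would analyze $\gamma$ directly. For each $i \in \{1,\dots,q-1\}$, consider the pair of vertices $\0$ and $\theta^i$; since $\phi$ is injective (hence an embedding by Theorem~\ref{thm_inj_is_emb}) these go to $c_1$-adjacent points $\phi(\0), \phi(\theta^i)$, and by compatibility $\gamma(\0)$ lies in $\Nof(\phi(\0))$ while $\gamma(\theta^i)$ lies in $\Nof(\phi(\theta^i))$. Because $\phi(I^{q-1})$ is an elementary cube, I would use Corollary~\ref{cor_comn_nbrs} and Corollary~\ref{cor_grt_thn_3} to control how many of the values $\gamma(\theta^i)$ can escape the image $\phi(I^{q-1})$: if two or more of the $\gamma(\theta^i)$ left that image in "different directions", then $\gamma$ restricted to some $S_0 \cup S_1 \cup S_2$ triple would force a common neighbor of too many points, contradicting $\phi(I^{q-1}) \neq \gamma(I^{q-1})$ being realized by a genuine cube. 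The upshot I am aiming for is that there is a single coordinate $j$ such that $\gamma$ "agrees with $\phi$ after a swap of coordinate $q$ with coordinate $j$," i.e.\ $B_q(\sigma\circ C_{j,q})$ equals $A_q(\sigma \circ C_{j,q})$ up to at most a flip $F_j$; unpacking this via \eqref{eq_AkCij}--\eqref{eq_BkCij} and the identities in Proposition~\ref{prop_FCRS} should give precisely $\sigma \circ F = \sigma \circ F \circ C_{i,j}$ with $F \in \{\id, F_j\}$, after renaming.

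**Alternative, cleaner route.** Rather than reanalyzing from scratch, I would try to reduce to Lemma~\ref{lem_compatible_embeddings}. If it happens that $\gamma$ is itself injective, then $\phi$ and $\gamma$ are compatible injective $(q-1)$-cubes, but with $\phi(I^{q-1}) \neq \gamma(I^{q-1})$ — so Lemma~\ref{lem_compatible_embeddings} does not directly apply (it assumes equal images). The trick is that $\phi$ and $\gamma$ share a large common subcube: the hypothesis "image of the $i$-face $\neq$ image of the opposite face" combined with compatibility means that, along most coordinate directions, $\gamma$ tracks $\phi$ and the discrepancy is concentrated in one coordinate. I would formalize "they agree on a corank-$1$ face" and then apply Lemma~\ref{lem_compatible_embeddings} (or its proof technique) to that face to produce the coordinate $j$ and the flip/identity $F$. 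Composing $\sigma$ with $C_{i,j}$ then reshuffles which coordinate plays the role of $q$, and the relation $\sigma \circ F = \sigma \circ F \circ C_{i,j}$ records the fact that this reshuffle is invisible to $\sigma$ because the two relevant faces coincide (possibly after a flip).

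**Main obstacle.**
The delicate part is the case analysis on how $\gamma$ can deviate from $\phi$ while remaining compatible with it and yet having a different image — in particular ruling out that $\gamma$ "wraps" onto a different cube in a way that still satisfies vertex-wise compatibility. This is exactly the kind of argument carried out in the proof of Lemma~\ref{lem_compatible_embeddings} (the part eliminating $C = C_{i,j}\circ C_{j,k}$ and forcing $F = F_k$ with $k \in \{i,j\}$), and I expect the bookkeeping here — tracking a flip $F \in \{\id, F_j\}$ through the face formulas \eqref{eq_AkCij}, \eqref{eq_BkCij} and the commutation relations of Proposition~\ref{prop_FCRS_2} — to be the step where the $\id$-versus-$F_j$ dichotomy in the statement actually comes from, and where most of the care is needed.
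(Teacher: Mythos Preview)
Your proposal remains at the level of a plan, and the central mechanism of the paper's proof is absent. The paper's argument hinges on a step you never take: since $A_i\sigma$ is injective but its image differs from that of $B_i\sigma$, there exists $x\in I^q$ with $\sigma(x)\notin\sigma(I^q\setminus\{x\})$. Setting $\bar\sigma=\sigma\circ F$, where $F$ is the product of flips carrying $x$ to $\0$, gives $\bar\sigma(\0)\notin\bar\sigma(I^q\setminus\{\0\})$. This anchor is what drives everything: if $\bar\sigma|_{S_0\cup S_1}$ were injective, then (because $\bar\sigma(\0)$ is never repeated) each $\bar\sigma(\theta^{ik})$ would be the \emph{unique} common neighbor of $\bar\sigma(\theta^i)$ and $\bar\sigma(\theta^k)$ other than $\bar\sigma(\0)$, making $\bar\sigma|_{S_0\cup S_1\cup S_2}$ injective and hence, by Lemma~\ref{lem_sig_inj}, $\bar\sigma$ itself injective --- contradicting the noninjectivity of $\sigma$. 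Thus some $\bar\sigma(\theta^j)=\bar\sigma(\theta^i)$, and this single equality is then propagated to $\bar\sigma(\theta^{ik_1\cdots k_n})=\bar\sigma(\theta^{jk_1\cdots k_n})$ for all $n$ by induction using Corollary~\ref{cor_grt_thn_3}. Only \emph{after} $\bar\sigma=\bar\sigma\circ C_{i,j}$ is established does one reduce the a priori arbitrary flip product $F$ down to $\{\id,F_j\}$ via Proposition~\ref{prop_FCRS_2}; so the commutation relations you point to are the final cleanup, not the origin of $F$.

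Neither of your routes reaches the existence of~$j$. Comparing $\phi=A_q\sigma$ and $\gamma=B_q\sigma$ vertexwise yields only $\gamma(\theta^k)\in\Nof(\phi(\theta^k))$, which does not force two values of $\sigma$ on $S_1$ to coincide; your claim that the deviation of $\gamma$ from $\phi$ is concentrated in a single coordinate is precisely what has to be proved, and your sketch gives no mechanism for it. The ``alternative route'' via Lemma~\ref{lem_compatible_embeddings} is, as you yourself note, blocked by the unequal-image hypothesis, and the fallback assertion that $\phi$ and $\gamma$ agree on a corank-$1$ face is unsupported --- indeed $\gamma$ need not even be injective, so there is no obvious common elementary subcube to restrict to.
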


\begin{proof}
Our strategy is to first show that $\sigma \circ T = \sigma \circ T \circ \tau_{ij}$, where $T=T_z$ for some $z\in \Z_2^q$, and then to show that in fact we must have $T=\id$ or $T=T_i$.

By replacing $\sigma$ with $\sigma \circ T_i$ 
if necessary, we assume without loss of generality 
that $A_i\sigma$ is injective, and $A_i\sigma(I^{q-1}) \neq B_i\sigma(I^{q-1})$. 
This means that there is some  $x\in I^{q}$ 
such that $\sigma(x)\in
A_i
\sigma(I^{q-1})$ but $\sigma(x)\notin 
B_i\sigma(I^{q-1})$. 
In fact, $\sigma(x)\notin\sigma(I^q-\{x\})$, 
because 
$A_i\sigma$ is injective.

Let $x=\theta^{i_1\cdots i_n}$
be the minimal element (lexicographically) 
%this minimality assumption will be important for the 
%further part
with $\sigma(x)\notin\sigma(I^q-\{x\})$.
Note that $i\notin\{i_1,\ldots,i_n\}$.
Let 
$T_x = T_{i_1} \circ \dots \circ T_{i_n}$, and 
$\bar\sigma = \sigma \circ T_x$.
Since
$i\notin\{i_1,\ldots,i_n\}$, we have
$A_i\bar\sigma$ injective and $B_i\bar\sigma$
noninjective.
We also have $\bar\sigma(\0)=\sigma(x)$, which means that
$\bar\sigma(\0)\notin\bar\sigma(I^q-\{\0\})$.

We will show that there is some index $j\neq i$, 
with
$\bar\sigma=\bar\sigma\circ \tau_{ij}$ to get 
the desired result.

%It suffices to show that
%$\bar\sigma(a)=\bar\sigma\circ C_{i,j}(a)$, with 
%$a_i=1$ and $a_j=0$ for some $j\neq i$. 

It will suffice to show that there is some $j$ with
\begin{equation}\label{eqn_Type1ABBA_elt}
\bar\sigma\left(\theta^{ik_1\cdots k_n}\right)
=\bar\sigma\left(\theta^{jk_1\cdots k_n}\right),
\end{equation}
for all indices $k_1,\dots,k_n$ different from $i$ and $j$. We prove \eqref{eqn_Type1ABBA_elt} by induction on $n$.

For $n=0$, we must show that $\bar\sigma\left(\theta^i\right)=\bar\sigma
\left(\theta^j\right)$ for some $j$. To obtain a contradiction, assume that $\bar \sigma(\theta^i) \neq \bar \sigma(\theta^j)$ for all $i,j$. 
Note that the element 
$\bar\sigma\left(\theta^i\right)$
is not equal to $\bar\sigma(\0)$ 
because $\bar\sigma(\0)\notin
\bar\sigma(I^q-\{\0\})$
and so from injectivity of $A_i\bar\sigma$, 
$\bar\sigma\vert_{S_0\cup S_1}$ is injective. 

Now for any $k \neq i$, the point $\bar\sigma(\theta^{ik})$ is a common neighbor 
of $\bar\sigma(\theta^i)$ and $\bar\sigma(\theta^k)$. 
This point $\bar\sigma(\theta^{ik})$ is distinct for different $k$, for if $\sigma(\theta^{ik_1})=\sigma(\theta^{ik_2})$, for $k_1\neq k_2$, then $\sigma(\theta^{ik_1})=\sigma(\theta^{ik_2})$ and $\sigma(\0)$ has three common neighbors, namely $\sigma(\theta^{k_1}),\sigma(\theta^{k_2})$ and $\sigma(\theta^{i})$, which is impossible from Corollary \ref{cor_comn_nbrs}.
And so $\bar\sigma\vert_{S_0\cup S_1\cup S_2}$ 
is injective.
But then,
by Lemma \ref{lem_sig_inj} 
$\bar\sigma$ is injective. This contradicts our assumption that $\sigma$ is noninjective, because $\sigma(I^q)=\bar\sigma(I^q)$.

Therefore there is an index $j$ with $\bar\sigma
\left(\theta^i\right)=\bar\sigma\left(\theta^j\right)
$, and the $n=0$ case of our inductive argument for \eqref{eqn_Type1ABBA_elt} is proved.

For $n=1$, we show that for any index $k$ different from $i$ and $j$,
$\bar\sigma(\theta^{ik})=\bar\sigma(\theta^{jk})$:
$\bar\sigma\left(\theta^{ik}\right)$ is a common neighbor of $\bar\sigma(\theta^i)=\bar\sigma(\theta^j)$ and $
\bar\sigma(\theta^k)$.
By injectivity of $A_i\bar\sigma$ and Lemma 
\ref{cube_unique},
the only common neighbors of $\bar\sigma(\theta^j)$ and $
\bar\sigma(\theta^k)$ are 
$\bar\sigma(\0)$ and $\bar\sigma(\theta^{jk})$, 
which means $
\bar\sigma\left(\theta^{ik}\right)=\bar\sigma
\left(\theta^{jk}\right)$, because 
$\bar\sigma(\0)\notin\bar\sigma(I^q-\{\0\})$.

For $n>1$, note that 
$\bar\sigma(\theta^{ik_1\cdots k_n})$
is a common neighbor of at least three elements:
$\bar\sigma(\theta^{k_1\cdots k_n})$,
and any elements of the form
$\bar\sigma(\theta^{ik_1\cdots \hat{k_l}\cdots k_n})$ for 
$l\in\{1,\ldots,n\}$.
By induction $\bar\sigma(\theta^{ik_1\cdots \hat{k_l}\cdots k_n})
=\bar\sigma(\theta^{jk_1\cdots \hat{k_l}\cdots k_n})$,
and by Corollary \ref{cor_grt_thn_3} and injectivity
of $A_i\sigma$, the only common neighbor of 
$\bar\sigma(\theta^{k_1\cdots k_n})$
and all $\bar\sigma(\theta^{jk_1\cdots 
\hat{k_l}\cdots k_n})$ for $l\in\{1,\ldots,n\}$
is
$\bar\sigma(\theta^{jk_1\cdots k_n})$.
Therefore, $\bar\sigma(\theta^{ik_1\cdots k_n})=
\bar\sigma(\theta^{jk_1\cdots k_n})$, establishing \eqref{eqn_Type1ABBA_elt}.

Thus far, we have shown that $\sigma \circ T = \sigma \circ T \circ \tau_{ij}$, where $T = T_x = T_{i_1} \circ \dots \circ T_{i_n}$ and $\tau_{ij}$ is the transposition of coordinates $i$ and $j$.
% where $F$ is some (possibly trivial) composition of flips.

Since the various $T_k$ commute with one another, we can write $T = T' \circ G$, where $T'$ is a composition of axis reflections in dimensions $i$ and $j$, and $G$ is a composition of axis reflections in dimensions other than $i$ and $j$. This means that $T' \in \{\id, T_i,T_j,T_i\circ T_j\}$.

We have 
$\sigma \circ T' \circ G= \sigma \circ T' \circ 
G\circ \tau_{ij}.$
But $G$ commutes with $\tau_{ij}$ since $G$ consists of reflections in dimensions other than $i$ and $j$.
Thus we can write
\[\sigma \circ T' \circ G= \sigma \circ T' \circ \tau_{ij} \circ G.\]
Since $G$ is 
a bijection and a self inverse, 
we may cancel $G$ to obtain  
\[ \sigma \circ T' = \sigma \circ T' \circ \tau_{ij}, \]
where $T'\in\{\id, T_i, T_j, T_i\circ T_j\}$. If $T'\in\{\id, T_i, T_j\}$, then we are done. So it remains to consider the case when $T'=T_i\circ T_j$.

%When $T'=T_i$, we have $\sigma\circ T_i=\sigma\circ T_i\circ r_{\tau_{ij}}$,
%then composing with $T_i\circ T_j$ on both sides
%and using $T_i\circ r_{\tau_{ij}}=r_{\tau_{ij}}\circ T_j$,
%we get 
%$\sigma\circ T_j=\sigma\circ T_j\circ r_{\tau_{ij}}$ as desired.

In this case we have $\sigma\circ T_i\circ T_j=\sigma\circ T_i\circ T_j \circ \tau_{ij} = \sigma \circ \tau_{ij} \circ T_i\circ T_j $, and composition with $T_i\circ T_j$ will give $\sigma = \sigma \circ \tau_{ij}$ as desired.
\end{proof}

\begin{lem}\label{oppfacesflips}
Let $\sigma$ be a singular $q$-cube with $q\ge 2$, such that $A_i\sigma=B_i\sigma \circ T_j$ for some $i$ and $j$. Then
$\sigma=\sigma\circ \tau_{ij}$.
\end{lem}
\begin{proof}
From Proposition \ref{faceformulas}, $A_i\sigma=B_i\sigma \circ T_j$ gives
 $B_jA_i\sigma=A_jB_i\sigma$. Now, for $t_i=t_j$, 
 we easily get 
 $\sigma(t_1,\ldots,t_q)=\sigma\circ \tau_{ij}
 (t_1,\ldots,t_q)$. Whereas  for $t_i\neq t_j$, we 
 get $\sigma(t_1,\ldots,t_q)=
 \sigma\circ \tau_{ij}(t_1,\ldots,t_q)$ from 
 $B_jA_i\sigma=A_jB_i\sigma$.
\end{proof}
\begin{thm}\label{newtypes}
Let $\sigma$ be a nondegenerate singular $q$-cube 
in $(X,c_1)$ with $q\geq 2$ and at least one face injective. Then one of the following cases holds for $\sigma$: 
\begin{enumerate}
\item For some $i$, $A_i\sigma$ and $B_i\sigma$ are injective, with each being the rotation of the other. \label{twofaces}
\item There are $i,j$ with $\sigma\circ T = \sigma\circ T \circ \tau_{ij}$, where $T\in \{\id, T_i\}$.\label{Ttau}
\end{enumerate}
\end{thm}
\begin{proof}
 If $\sigma$ has an injective face opposite a  face with a different image, then Lemma \ref{biglemma} applies, and Case \ref{Ttau} holds. So for the remainder of the proof, we consider only when all injective faces of $\sigma$ have image equal to their opposite faces.

Since opposite faces are always compatible, we may apply Lemma \ref{lem_compatible_embeddings}. 
Suppose for some $i$, $A_i\sigma$ and $B_i\sigma$ are injective.

In Case \ref{equal} of the lemma, we would have $A_i\sigma = B_i\sigma$, but this is impossible because $\sigma$ is nondegenerate.

In Case \ref{F} of that lemma, $A_i\sigma = B_i\sigma \circ T_j$, for some $j$ but then by Lemma \ref{oppfacesflips}, we have $\sigma = \sigma \circ \tau_{ij}$.

In Case \ref{FC} of Lemma \ref{lem_compatible_embeddings}, we immediately obtain Case 1 of the present theorem.

\end{proof}
\begin{lem}\label{rot_onlyinjfaces}
Let $\sigma$ be a nondegenerate singular $q$-cube such that for some $i$, $A_i\sigma$ and $B_i\sigma$ are injective and each is a rotation of the other. Then $A_i\sigma$ and $B_i\sigma$ are the only injective faces of $\sigma$.
\end{lem}
\begin{proof}
We prove the given statement only for the cases when $A_i\sigma=B_i\sigma\circ T_j\circ \tau_{jk}$ and $j<k<i$, while the proofs for the other cases of $i,j$ and $k$ are similar. 
Using the assumptions above we get: $A_jA_kA_i\sigma=B_jA_kB_i\sigma$ because
\begin{align*}
A_jA_kA_i\sigma(t_1,\ldots,t_q)
&=A_i\sigma(t_1,\ldots,t_j=0,\ldots,t_k=0,\ldots,t_q)\\
&=B_i\sigma\circ T_j\circ \tau_{jk}(t_1,\ldots,t_j=0,\ldots,t_k=0,\ldots,t_q)\\
&=B_i\sigma(t_1,\ldots,t_j=1,\ldots,t_k=0,\ldots,t_q)\\
&=B_jA_kB_i\sigma(t_1,\ldots,t_q)
\end{align*}
Similarly, we get the following equations:
\begin{align*}
B_jA_kA_i\sigma&=B_jB_kB_i\sigma\\
A_jB_kA_i\sigma&=A_jA_kB_i\sigma\\
B_jB_kA_i\sigma&=A_jB_kB_i\sigma
\end{align*}
Comparing both the sides of the 4 equations above, we see that $A_k\sigma$,$B_j\sigma$,$A_j\sigma$ and $B_k\sigma$ are noninjective, respectively.
The equations above easily give the noninjectivity of the other faces, \textit{e.g.} for $l<j$, $A_l\sigma$ is noninjective, because $A_lA_jA_kA_i\sigma=A_lB_jA_kB_i\sigma$.
%\textcolor{red}{Note for Chris: If we had some other condition than $j<k<i$, the only difference would be in writing the codimension-3 and -4 faces, the subscripts should appear in ascending order. For example, for $j<i<k$ we would write $A_jA_iA_k\sigma$ instead of $A_jA_kA_i\sigma$.}
\end{proof}
%\begin{lem}\label{type3rotation}
%If $\sigma$ is as in Theorem \ref{newtypes} and satisfies case \ref{twofaces}, so that the only injective faces are $A_i\sigma$ and $B_i\sigma$, then there is $j\neq i$ with $A_i\sigma = B_i \sigma \circ T_i \circ \tau_{ij}$.
%\end{lem}
%\begin{proof}
%If $A_i\sigma(I^{q-1}) \neq B_i\sigma(I^{q-1})$, then $\sigma$ is Type-1 by Lemma \ref{type1lem}. Since $\sigma$ is not Type-1, we have $A_i\sigma(I^{q-1}) = B_i\sigma(I^{q-1})$.\todo{rewrite this without the types. Not hard I hope?} Since opposite faces are always compatible, we may apply Lemma \ref{lem_compatible_embeddings}. We will show that cases \ref{equal} and \ref{FC} of that lemma are impossible, and thus we must have $A_i\sigma = B_i \sigma \circ T_i \circ \tau_{ij}$.
%
%In Case \ref{equal} of the lemma, we would have $A_i\sigma = B_i\sigma$, but this is impossible because $\sigma$ is nondegenerate. Case \ref{F} of the lemma would imply that $\sigma$ is Type-1 by Lemma \ref{lem_case3_flip}. Thus Case \ref{FC} of Lemma \ref{lem_compatible_embeddings} holds, which is that $A_i\sigma = B_i \sigma \circ T_i \circ \tau_{ij}$.
%\end{proof}
 
\section{The chain map $\beta$, and functoriality of $H_q^{c_1}(X)$}\label{sec_chainmap}
Now we consider our main goal for the paper: to define a chain map $\beta$ from the chain complex of $dH_q$ to the chain complex of $H^{c_1}_q$. This $\beta$ will be defined in terms of the orientation of an injective cube.

\begin{Def}
For a cube automorphism $\alpha:I^q \to I^q$, we define the \emph{orientation} $o^\alpha$ of $\alpha$ as follows: if $\alpha(t) = z \oplus \pi(t)$ for $z\in \Z_2^q$ and $\pi\in \Sigma_q$, then $o^\alpha = (-1)^{\#z} \cdot (-1)^\pi$, where $\#z$ is the number of nonzero coordinates of $z$, and $(-1)^\pi$ is the parity of the permutation $\pi$.

For an injective cube $\sigma: I^q \to X$ expressed as in Theorem \ref{singularhyperoctahedral} as $\sigma(t) = M(z\oplus \pi(t)) + x$, we define the orientation of $\sigma$ as the orientation of the cube automorphism part, that is $o^\sigma = (-1)^{\#z} \cdot (-1)^\pi$.
\end{Def}

%We will require one composition property for these orientations:
%\begin{lem}\label{orientationcomp}
%Let $\sigma: I^q \to X$ be a singular $q$-cube, and $\alpha:I^q \to I^q$ be an automorphism. Then $o^{\sigma\circ \alpha} = o^\sigma o^\alpha$.
%\end{lem}
%\begin{proof}
%Let $\sigma(t) = M(z\oplus \pi(t))+x$ as in Theorem \ref{singularhyperoctahedral}, and let $\alpha(t) = w\oplus \phi(t)$ as in Theorem \ref{hyperoctahedralthm}. Then we have:
%\[ \sigma\circ \alpha(t) = M(z\oplus \pi(w \oplus \phi(t))) + x = M((z\oplus \pi(w)) \oplus (\pi\circ \phi)(t), \]
%and so
%\[ 
%\begin{split}
%o^{\sigma \circ \alpha} &= (-1)^{\#(z\oplus \pi(w))} (-1)^{\pi\circ \phi} = (-1)^{\#z}(-1)^{\#\pi(w)} (-1)^\pi (-1)^\phi \\
%&= (-1)^{\#z}(-1)^\pi (-1)^{\#w} (-1)^\phi = o^\sigma o^\alpha 
%\end{split}
%\]
%as desired.
%\end{proof}

Now we are ready to define our chain map $\beta$:
\begin{Def}
For each $q$, we define a homomorphism $\beta_q:dC_q(X) \to C_q^{c_1}(X)$ on generators as follows: Let $\sigma$ be a singular $q$-cube, and we define
\[ \beta_q(\sigma) = \begin{cases} 0 &\text{ if $\sigma$ is noninjective}, \\
o^\sigma \cdot \sigma(I^q) &\text{ if $\sigma$ is injective}. 
\end{cases}\]
\end{Def}

The remainder of the section will prove that $\beta$ is a chain map.
This will require that we show the two boundary operators for $dC_q$ and $C^{c_1}_q$ are compatible. As a step towards this goal, we first show that there is a correspondence between the appropriate face operators.

\begin{lem}\label{c1orientations}
Let $\sigma$ be an injective cube with $\sigma(t) = M(z\oplus \pi(t)) + x$. Let $1 \le k_1 < \dots < k_q \le n$ such that column $i$ of $M$ is the standard basis vector $e_{k_i} \in \Z^n$.

Then
\[
A_i \sigma(I^{q-1}) = \begin{cases}
A_{k_{\pi(i)}}^{c_1}(\sigma(I^q))$ if $ z_{\pi(i)} = 0, \\
B_{k_{\pi(i)}}^{c_1}(\sigma(I^q))$ if $ z_{\pi(i)} = 1,
\end{cases} 
\]
and
\[
B_i \sigma(I^{q-1}) = \begin{cases}
B_{k_{\pi(i)}}^{c_1}(\sigma(I^q))$ if $ z_{\pi(i)} = 0, \\
A_{k_{\pi(i)}}^{c_1}(\sigma(I^q))$ if $ z_{\pi(i)} = 1,
\end{cases} 
\]
\end{lem}
\begin{proof}
We will prove the statement for $A_i\sigma(I^{q-1})$. The statement for the $B_i$ face is similar.

Since $\sigma(t) = M(z\oplus \pi(t)) + x$, the set $\sigma(I^q)-x$ is the span of the columns of $M$. Thus we have
\[ \sigma(I^q)-x = \spanop \{e_{k_1}, \dots, e_{k_q}\}, \]
where ``span'' indicates the set of all linear combinations of these vectors using coefficients in $\{0,1\}$. Thus we will have
\[ 
\begin{split} 
A_{k_{\pi(i)}}^{c_1}(\sigma(I^q))-x &= \spanop\{e_{k_1},\dots,\hat e_{k_{\pi(i)}}, \dots, e_{k_q}\} \\
B_{k_{\pi(i)}}^{c_1}(\sigma(I^q))-x &= \spanop\{e_{k_1},\dots,\hat e_{k_{\pi(i)}}, \dots, e_{k_q}\} + e_{k_{\pi(i)}}
\end{split}
\]
where the hat denotes omission.

Let $A_i\sigma(t) = M'(z'\oplus \pi'(t)) + x'$ as in Theorem \ref{faceperm}. We have $x'=\sigma(0)-M'z'$, where $M'$ is the matrix $M$ with column $\pi(i)$ deleted, and $z'$ is the vector $z$ with entry $\pi'(i)$ deleted. When $z_{\pi(i)} = 0$, these deletions are irrelevant and we have $Mz = M'z'$. If $z_{\pi(i)} = 1$, then $Mz = M'z' + e_{k_{\pi(i)}}$. Thus we have:
\[ \begin{split}
x' &= \sigma(0) - M'z' = \begin{cases} \sigma(0) - Mz &$ if $ z_{\pi(i)} = 0, \\
\sigma(0) - Mz + e_{k_i} &$ if $ z_{\pi(i)} = 1. 
\end{cases} \\
&= \begin{cases} x &$ if $ z_{\pi(i)} = 0, \\
x + e_{k_i} &$ if $ z_{\pi(i)} = 1. 
\end{cases}
\end{split}
\]

Since $A_i\sigma(t) = M'(z'\oplus \pi'(t)) + x'$ we have:
\[ A_i\sigma(I^{q-1}) - x' = \spanop\{e_{k_1},\dots,\hat e_{k_{\pi(i)}}, \dots, e_{k_q}\}, \]
and thus the above gives:
\[ \begin{split}
A_i \sigma(I^{q-1}) - x &= \begin{cases}
\spanop\{e_{k_1},\dots,\hat e_{k_{\pi(i)}}, \dots, e_{k_q}\} &$ if $ z_{\pi(i)} = 0, \\
\spanop\{e_{k_1},\dots,\hat e_{k_{\pi(i)}}, \dots, e_{k_q}\} + e_{k_{\pi(i)}} &$ if $ z_{\pi(i)} = 1.
\end{cases} \\ 
&= \begin{cases}
A_{k_{\pi(i)}}^{c_1}(\sigma(I^q))-x$ if $ z_{\pi(i)} = 0, \\
B_{k_{\pi(i)}}^{c_1}(\sigma(I^q))-x$ if $ z_{\pi(i)} = 1.
\end{cases} 
\end{split}
\]
and so 
\[
A_i \sigma(I^{q-1}) = \begin{cases}
A_{k_{\pi(i)}}^{c_1}(\sigma(I^q))$ if $ z_{\pi(i)} = 0, \\
B_{k_{\pi(i)}}^{c_1}(\sigma(I^q))$ if $ z_{\pi(i)} = 1,
\end{cases} 
\]
as desired. 
\end{proof}

Orientations also behave predictably with respect to the face operators as follows:
\begin{thm}\label{faceorientation}
Let $\sigma$ be an injective cube with $\sigma(t) = M(z\oplus \pi(t)) + x$. Then 
\[ o^{A_i\sigma} = o^{B_i\sigma} = (-1)^{z_{\pi(i)} + i + \pi(i)} o^\sigma. \]
\end{thm}
\begin{proof}
By Lemma \ref{faceperm} we can see that $o^{A_i\sigma} = o^{B_i\sigma}$, since their automorphism parts $z'$ and $\pi'$ are the same. So it suffices to show that $o^{A_i\sigma} = (-1)^{z_{\pi(i)} + i + \pi(i)} o^\sigma$. As in Lemma \ref{faceperm}, let $A_i\sigma = M'(z'\oplus \pi'(t)) + x'$. 

Since $z'$ is obtained from $z$ by omission of coordinate $\pi(i)$, we have $(-1)^{\#z} = (-1)^{z_{\pi(i)}} (-1)^{z'}$. Using \eqref{delparity}, we have:
\[ \begin{split}
o^{A_i\sigma} &=  (-1)^{\#z'} (-1)^{\pi'} = (-1)^{z_{\pi(i)}} (-1)^{\#z} (-1)^{\pi'} \\
&= (-1)^{z_{\pi(i)}} (-1)^{\#z} (-1)^{i+\pi(i)}(-1)^\pi = (-1)^{z_{\pi(i)}}  (-1)^{i+\pi(i)} (-1)^\pi o^\sigma
\end{split}
\]
as desired.
\end{proof}

Orientations also behave predictably with respect to translations and transpositions, which lets us easily prove some useful properties of $\beta_q$:
\begin{thm}\label{betaflipswap}
For a singular $q$-cube $\sigma$ with $q\ge 1$, and any $i,j$, we have
\[ \beta_q(\sigma \circ T_j) = \beta_q(\sigma\circ \tau_{ij}) = -\beta_q(\sigma). \]
\end{thm}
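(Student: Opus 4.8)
The plan is to handle the noninjective and injective cases separately, and in the injective case to trace through how $F_j$ and $C_{i,j}$ affect the matrix $\bar M^\sigma$ and hence its determinant. First, suppose $\sigma$ is noninjective. Since $F_j$ and $C_{i,j}$ are bijections of $I^q$ (Proposition \ref{prop_FCRS}), the cubes $\sigma\circ F_j$ and $\sigma\circ C_{i,j}$ have the same image as $\sigma$ and are also noninjective; hence $\beta_q(\sigma\circ F_j)=\beta_q(\sigma\circ C_{i,j})=\beta_q(\sigma)=0$ and the identity holds trivially. So assume $\sigma$ is injective; then so are $\sigma\circ F_j$ and $\sigma\circ C_{i,j}$, and all three are embeddings with the same image $Q=\sigma(I^q)$, so $\beta_q$ evaluates each to $\pm Q$ and it suffices to compare orientations.

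For the flip $F_j$: writing $\sigma(x)=\sigma(0)+M^\sigma x$, one computes $(\sigma\circ F_j)(x) = \sigma(0) + M^\sigma(F_j x)$. Applying $F_j$ replaces $x_j$ by $1-x_j$, so $(\sigma\circ F_j)(x) = \bigl(\sigma(0) + (\text{column } j\text{ of } M^\sigma)\bigr) + M' x$, where $M'$ is $M^\sigma$ with column $j$ negated. Removing the zero rows, $\bar M^{\sigma\circ F_j}$ is $\bar M^\sigma$ with one column negated, so $o^{\sigma\circ F_j} = \det(\bar M^{\sigma\circ F_j}) = -\det(\bar M^\sigma) = -o^\sigma$, giving $\beta_q(\sigma\circ F_j) = -\beta_q(\sigma)$. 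For the swap $C_{i,j}$ with $i\neq j$ (the case $i=j$ is vacuous since $C_{i,i}=\id$, but note the stated identity would then read $\beta_q(\sigma)=-\beta_q(\sigma)$, so one should either assume $i\neq j$ throughout or observe $\beta_q(\sigma)=0$ forces injectivity to fail — I will state the theorem for $i\neq j$): $(\sigma\circ C_{i,j})(x) = \sigma(0) + M^\sigma(C_{i,j}x)$, and $C_{i,j}$ interchanges coordinates $i$ and $j$, which amounts to interchanging columns $i$ and $j$ of $M^\sigma$. Thus $\bar M^{\sigma\circ C_{i,j}}$ is $\bar M^\sigma$ with two columns transposed, so $\det(\bar M^{\sigma\circ C_{i,j}}) = -\det(\bar M^\sigma)$, i.e. $o^{\sigma\circ C_{i,j}} = -o^\sigma$ and $\beta_q(\sigma\circ C_{i,j}) = -\beta_q(\sigma)$.

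The main obstacle is purely bookkeeping: one must verify carefully that composing $\sigma$ with $F_j$ (resp. $C_{i,j}$) really does correspond to the claimed column operation on $M^\sigma$ — in particular that the extra constant picked up in the flip case is absorbed harmlessly into the new basepoint $(\sigma\circ F_j)(0)$ and does not disturb the linear part, and that deleting the $n-q$ zero rows to pass from $M$ to $\bar M$ commutes with these column operations (it does, since the column operations do not change which rows are zero). Once this is in place the determinant computation is immediate, and since $\bar M^\sigma$ is a square matrix with $\det(\bar M^\sigma) = o^\sigma = \pm 1$ (it is a signed permutation matrix, so invertible), negating a column or transposing two columns flips the sign, completing the proof.
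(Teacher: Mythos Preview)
Your proposal is correct and follows essentially the same approach as the paper's proof: split into noninjective and injective cases, and in the injective case show that $F_j$ negates a column of $\bar M^\sigma$ while $C_{i,j}$ swaps two columns, so the determinant (and hence the orientation) changes sign. Your remark about the $i=j$ case is a valid caveat that the paper's statement glosses over; the theorem is only meant to apply for $i\neq j$.
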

\begin{proof}
If $\sigma$ is noninjective, then $\sigma\circ T_j$ and $\sigma\circ \tau_{ij}$ are also noninjective, and so we have $\beta_q(\sigma \circ T_j) = \beta_q(\sigma\circ \tau_{ij}) = -\beta_q(\sigma) =0$.

Now assume that $\sigma$ is injective. Since $(\sigma\circ T_j)(I^q) = (\sigma \circ \tau_{ij})(I^q) = \sigma(I^q)$, it suffices to show that $o^{\sigma\circ T_j} = o^{\sigma\circ \tau_{ij}} = -o^\sigma$. Let $\sigma(t) = M(z\oplus \pi) + x$ as in Theorem \ref{singularhyperoctahedral}.

We have:
\[ o^{\sigma \circ \tau_{ij}} = (-1)^{\#z} (-1)^{\pi\circ \tau_{ij}} = (-1)^{\#z} \cdot -(-1)^\pi = -o^\sigma. \]
For $\sigma \circ T_j$, we have 
\[ \sigma \circ T_j(t) = M(z\oplus (\pi \circ T_j))+x = M((z\oplus e_{\pi(j)}) \oplus \pi) + x, \] 
and so
\[ o^{\sigma \circ T_j} = (-1)^{\# z + 1} (-1)^\pi = -(-1)^{\#z} (-1)^\pi = -o^\sigma \]
as desired.
\end{proof}

From the above and the definition of $S_{i,j}$ we obtain immediately:
\begin{cor}\label{cor_RSS}
For any singular $q$-cube $\sigma$ and $i<j$, 
\[ \beta(\sigma\circ S_{i,j})=(-1)^{j-i}\beta\sigma. \]

\end{cor}

We aim to prove that $\beta$ is a chain map. As a preliminary step, we show that $\beta\circ \partial$ behaves nicely with respect to the maps $T_i$ and $\tau_{ij}$. 
\begin{lem}\label{betaboundaryflipswap}
Let $\sigma$ be a $q$-cube with $i\neq j$. Then
\begin{align*}
\beta(\partial (\sigma \circ T_i)) &= -\beta(\partial \sigma) \\
\beta(\partial (\sigma \circ \tau_{ij})) &= -\beta(\partial \sigma)
\end{align*}
\end{lem}
\begin{proof}
Both identities follow from Proposition \ref{boundaryflipswap} and Theorem \ref{betaflipswap}. We will prove the second, which is more complicated. Assuming $i<j$, by Proposition \ref{boundaryflipswap} we have:
\begin{align*}
\beta(\partial(\sigma \circ \tau_{ij})) &= \sum_{k<i}(-1)^{k}(\beta(A_k\sigma\circ 
\tau_{i-1,j-1})-\beta(B_k\sigma\circ \tau_{i-1,j-1}))
\\
&\qquad+(-1)^i(
\beta(A_j\sigma\circ S_{i,j-1})-\beta(B_j\sigma\circ S_{i,j-1}))
\\
&\qquad+\sum_{i<k<j}(\beta(A_k\sigma\circ \tau_{i,j-1}) -
\beta(B_k\sigma\circ \tau_{i,j-1}))
\\
&\qquad+(-1)^j(\beta(A_i\sigma\circ 
S_{j-1,i})-\beta(B_i\sigma\circ S_{j-1,i}))\\
&\qquad+\sum_{j<k}
(-1)^k(\beta(A_k\sigma\circ \tau_{i,j})
-\beta(B_k\sigma\circ \tau_{i,j})).
\end{align*}
By Theorem \ref{betaflipswap}, all terms written inside the three summations will be negated by $\beta$. For the other terms, we apply Corollary \ref{cor_RSS}, and the above becomes:
\begin{align*}
\beta(\partial(\sigma \circ \tau_{ij})) &= -\sum_{k\not\in\{i,j\}} (-1)^{k}(\beta(A_k\sigma)-\beta(B_k\sigma))
\\
&\qquad+(-1)^i (-1)^{j-1-i}(
\beta(A_j\sigma)-\beta(B_j\sigma)) \\
&\qquad+(-1)^j(-1)^{i-j+1}(\beta(A_i\sigma)-\beta(B_i\sigma))\\
&= -\sum_{k} (-1)^{k}(\beta(A_k\sigma)-\beta(B_k\sigma)) = -\beta(\pd(\sigma))
\end{align*}
as desired.
\end{proof}

Now we are ready for our main result of the section.
\begin{thm} %\label{thm_beta_ch_map}
$\beta:dC_q(X)\to C_q^{c_1}(X)$ is a chain map.
\end{thm}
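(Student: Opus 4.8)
The plan is to verify directly that $\partial_q^{c_1} \circ \beta_q = \beta_{q-1} \circ \partial_q$ on each generator $\sigma$ of $dC_q(X)$, splitting into cases according to the degree of injectivity of $\sigma$. The case $q \le 1$ and the case of degenerate $\sigma$ should be handled separately and are easy. For $q \ge 2$ and $\sigma$ nondegenerate, the key observation is that $\beta_q(\sigma)$ and all the face-terms $\beta_{q-1}(A_i\sigma)$, $\beta_{q-1}(B_i\sigma)$ vanish unless $\sigma$ has degree of injectivity $q$ or $q-1$, so only those two cases carry content.

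If $\sigma$ is injective, then $\beta_q(\sigma) = o^\sigma\cdot\sigma(I^q)$, and
\[
\partial_q^{c_1}\beta_q(\sigma) = o^\sigma\sum_{i=1}^q (-1)^i\bigl(A^{c_1}_{k_i}(\sigma(I^q)) - B^{c_1}_{k_i}(\sigma(I^q))\bigr).
\]
On the other side, each face $A_i\sigma$, $B_i\sigma$ is injective (being a restriction of an embedding), so $\beta_{q-1}(\partial_q\sigma) = \sum_i (-1)^i(o^{A_i\sigma}\cdot A_i\sigma(I^{q-1}) - o^{B_i\sigma}\cdot B_i\sigma(I^{q-1}))$. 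Using $o^{B_i\sigma} = o^{A_i\sigma}$, equation \eqref{faceorientationeq} giving $o^{A_i\sigma} = (-1)^{i+k_i}o_i^\sigma o^\sigma$, and equation \eqref{c1orientationseq} relating $o_i^\sigma(A_i\sigma(I^{q-1}) - B_i\sigma(I^{q-1}))$ to $A^{c_1}_{k_i}(\sigma(I^q)) - B^{c_1}_{k_i}(\sigma(I^q))$, the two sides match term by term (the index shift between the cubical boundary $\partial_q^{c_1}$, which sums over nondegenerate coordinates, and $\partial_q$, which sums over all $q$ coordinates, is absorbed by the sign $(-1)^{k_i}$ that relabels row $k_i$ to its position in $\bar M^\sigma$). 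This case is essentially bookkeeping with the three displayed orientation identities.

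The substantive case is $\sigma$ of degree $q-1$, where $\beta_q(\sigma) = 0$, so we must show $\beta_{q-1}(\partial_q\sigma) = 0$. Here I would invoke Theorem \ref{thm_Type123}: $\sigma$ is Type-1, Type-2, or Type-3. For Type-1, $\sigma$ has exactly four injective faces $A_i\sigma, B_i\sigma, A_j\sigma, B_j\sigma$, all other faces noninjective (hence killed by $\beta_{q-1}$), so $\beta_{q-1}(\partial_q\sigma) = (-1)^i(\beta A_i\sigma - \beta B_i\sigma) + (-1)^j(\beta A_j\sigma - \beta B_j\sigma)$; I would use Lemma \ref{lem_Type12_faces_codim2faces} (the relations $A_i\sigma = A_j\sigma\circ S_{i,j-1}$ etc., or their flipped analogues) together with Corollary \ref{cor_RSS} and Theorem \ref{betaflipswap} to show these four terms cancel in pairs. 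For Type-3, $\sigma$ has exactly two injective faces, both $i$-faces, which are rotations of each other; by Corollary \ref{cor_RSS}, $\beta(A_i\sigma) = \beta(B_i\sigma\circ R) = \beta(B_i\sigma)$, so $(-1)^i(\beta A_i\sigma - \beta B_i\sigma) = 0$ and all other faces vanish. For Type-2, $\sigma$ has exactly two injective faces, one in coordinate $i$ and one in coordinate $j$; the delicate point is that these two injective faces need not obviously cancel, so I expect to need the codimension-$2$ face identities of Lemma \ref{lem_Type12_faces_codim2faces} more carefully here, possibly also examining which of the $j$-faces is injective (matching the $i$-side via the isomorphism $S_{i,j-1}$ or $S_{i,j-1}\circ F_{j-1}$) and tracking the resulting sign against $(-1)^i$ versus $(-1)^j$.

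The main obstacle will be the Type-2 (and to a lesser extent Type-1) sign-tracking: one must show that the single nonzero contribution from the $i$-coordinate and the single nonzero contribution from the $j$-coordinate in $\beta_{q-1}(\partial_q\sigma)$ are negatives of each other, which requires combining the precise isomorphism relating the $i$-face to the $j$-face (from Lemma \ref{lem_Type12_faces_codim2faces}), the sign $(-1)^{j-i}$ produced by $S_{i,j-1}$ via Corollary \ref{cor_RSS}, and the relative sign $(-1)^i$ versus $(-1)^j$ in the boundary formula. I would do this computation case by case on whether $F = \id$ or $F = F_j$ in the conclusion of Lemma \ref{biglemma}, being careful that in the $F = F_j$ case the relation carries an extra flip $F_{j-1}$ contributing a sign $-1$ via Theorem \ref{betaflipswap}, which is exactly what is needed to compensate for the fact that in that case it is $A_i\sigma$ and $B_j\sigma$ (rather than $A_i\sigma$ and $A_j\sigma$) that are related.
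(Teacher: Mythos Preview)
Your proposal is correct and follows essentially the same route as the paper's proof: the injective case is handled via the orientation identities \eqref{faceorientationeq} and \eqref{c1orientationseq}, the low-degree case is trivial, and the degree-$(q-1)$ case is dispatched via the Type-1/2/3 classification of Theorem~\ref{thm_Type123}, using Lemma~\ref{lem_Type12_faces_codim2faces} together with Corollary~\ref{cor_RSS} and Theorem~\ref{betaflipswap} for the sign cancellations. Your anticipation of the Type-2 sign-tracking (the $(-1)^{j-i}$ from $S_{i,j-1}$ balancing $(-1)^i$ against $(-1)^j$, with an extra $-1$ from $F_{j-1}$ in the $F=F_j$ subcase) is exactly the computation the paper carries out.
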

\begin{proof}
For any singular $q$-cube $
\sigma$, we need to show that $\beta\partial\sigma=
\partial^{c_1}\beta\sigma$. 

If $\sigma$ is noninjective, and all faces of 
$\sigma$ are also noninjective, then we have $\beta\partial\sigma=0=
\partial^{c_1}\beta\sigma$.

Next we consider the case when $\sigma$ is injective. Let $\sigma(t) = M(z\oplus \pi(t)) + x$ be the expression given by Theorem \ref{singularhyperoctahedral}, and let $1\le k_1< \dots < k_q \le n$ be as in Lemma \ref{c1orientations}, so that column $i$ of $M$ is the standard basis vector $e_{k_i}$.

Then by Theorem \ref{faceorientation} we have:
%we use \eqref{c1orientationseq} and \eqref{faceorientationeq} to obtain:
\begin{align*}
\beta(\pd \sigma) &= \beta\left( \sum_{i=1}^q (-1)^i (A_i\sigma - B_i\sigma) \right) \\
&= \sum_{i=1}^q (-1)^i (o^{A_i\sigma} (A_i\sigma(I^{q-1})) - o^{B_i\sigma} (B_i\sigma(I^{q-1}))) \\
&= \sum_{i=1}^q (-1)^i (-1)^{z_{\pi(i)}+i + \pi(i)} o^\sigma (A_i\sigma(I^{q-1}) - B_i\sigma(I^{q-1})) \\
&= o^\sigma \sum_{i=1}^q (-1)^{z_{\pi(i)}+ \pi(i)}  (A_i\sigma(I^{q-1}) - B_i\sigma(I^{q-1})).
\end{align*}

By Lemma \ref{c1orientations} we have 
\[ (-1)^{z_{\pi(i)}} (A_i\sigma(I^{q-1}) - B_i\sigma(I^{q-1})) = A^{c_1}_{k_{\pi(i)}}\sigma(I^{q}) - B^{c_1}_{k_{\pi(i)}}\sigma(I^{q}). \]
Thus we continue the above to obtain:
\[
\begin{split}
\beta(\pd \sigma) &= o^\sigma \sum_{i=1}^q (-1)^{\pi(i)} (A^{c_1}_{k_{\pi(i)}}\sigma(I^{q}) - B^{c_1}_{k_{\pi(i)}}\sigma(I^{q})) \\
&= o^\sigma \sum_{j=1}^n  (-1)^{j} (A^{c_1}_{j}\sigma(I^{q}) - B^{c_1}_{j}\sigma(I^{q})) \\
&= o^\sigma \pd^{c_1}(\sigma(I^q)) = \pd^{c_1}(o^\sigma \sigma(I^q)) = \pd^{c_1}(\beta\sigma)
\end{split}
\]
as desired.

It remains to consider
the case when $\sigma$ is noninjective 
and at least one face of $\sigma$ is injective. 
In this case we automatically have $\beta\sigma =0$, and thus $\pd^{c_1}(\beta\sigma) =0$, and so we must show that $\beta(\pd \sigma) = 0$.

By Theorem \ref{newtypes}, either $A_i\sigma$ and $B_i\sigma$ are rotations of each other, or we have $\sigma \circ T = \sigma \circ T \circ \tau_{ij}$ for some $T \in \{\id, T_i\}$. We consider these cases separately.

First we consider the case where $A_i\sigma$ and $B_i\sigma$ are rotations of each other. Note that $\beta(A_i\sigma) = \beta(B_i\sigma)$ by Theorem \ref{betaflipswap} and $A_i\sigma$ and $B_i\sigma$ are the only injective faces by Lemma \ref{rot_onlyinjfaces} and thus we have 
\[ \beta(\pd \sigma) = (-1)^i(\beta(A_i\sigma) - \beta(B_i\sigma)) = 0 \]
as desired.

Now we consider the case where $\sigma \circ T = \sigma \circ T \circ \tau_{ij}$ for some $T \in \{\id, T_i\}$. By Lemma \ref{betaboundaryflipswap} we have $\beta(\partial(\sigma \circ T)) = -\beta(\partial(\sigma\circ T\circ \tau_{ij}))$. But since $\sigma\circ T= \sigma\circ T \circ \tau_{ij}$, this means that $\beta(\partial(\sigma\circ T)) = 0$. Thus, again using Lemma \ref{betaboundaryflipswap}, we have $\beta(\partial\sigma) = 0$.
\end{proof}

For the sake of our discussion we restate 
\cite[Conjecture~6.3]{Chris_Homotopy_relations}:
\begin{conj}\label{conjecture}
The chain map $\beta:C_q(X) \to C^{c_1}_q(X)$ induces an isomorphism $H_q(X) \cong H_q^{c_1}(X)$.
\end{conj}

In \cite{Chris_Homotopy_relations} it was not even known that $\beta$ was a chain map. Here we have shown that it is a chain map, and is surjective. We still are unable to determine if it induces an injection. 
%\todo{do you agree with this? Or we can leave it out}
%\todo[color={red!100!green!33}]{I deleted (It now seems likely to the authors that the conjecture is false.)}

\subsection{Functoriality and homotopy invariance of $H^{c_1}_q$}

We quote the definition of (digital) homotopy 
from \cite{Boxer_99}.
For integers, $a,b$ with $a<b$, the set $[a,b]_\mathbb{Z}
=\{a,a+1,\ldots,b\}$ is called a digital interval.
Let $f,g:X\to Y$ be continuous maps
between digital images $X$ and $Y$.
If there is an integer $k>0$ and a function
$F:X\times [0,k]_\mathbb{Z}\to Y$ satisfying:
\begin{itemize}
\item for all $x\in X$, we have $F(x,0)=f(x)$
and $F(x,k)=g(x)$;
\item for all $x\in X$, the induced function
$F_x:[0,k]_\mathbb{Z}\to Y$ defined by
$F_x(t)=F(x,t)$ is continuous;
\item for all $t\in [0,k]_\mathbb{Z}$, the induced function
$F_t:X\to Y$ defined by
$F_t(x)=F(x,t)$ is continuous.
\end{itemize}
Then $F$ is a \emph{homotopy} from $f$ to $g$, and $f$ and $g$
are \emph{homotopic}.

We say that the digital images $X$ and $Y$
are \emph{homotopy equivalent} or have the same \emph{homotopy 
type},
if there is a continuous function 
$f:X\to Y$ and a continuous function 
$g:Y\to X$, such that $g\circ f$ is homotopic to the identity
map on $X$ and $f\circ g$ is homotopic to the identity
map on $Y$.

%We also quote the definitions
%of  digital analogues of retraction,
%deformation retraction from \cite{Boxer_05}.
%Let $X$ be a digital image
%and $A\subset X$.
%A continuous function $r:X\to A$
%is called a  \emph{retraction} if $r(a)=(a)$ 
%for all $a\in A$.
%If the identity map on $X$ is homotopic to the 
%$\kappa$-retraction $r$, then 
%we say that $A$ is a \emph{deformation retract} of $X$
%and the homotopy from the identity map to $r$ is called a
%\emph{deformation retraction} of $X$ onto $A$.
%If $A$ is a deformation retract of $X$, then 
%$A$ and $X$ are homotopically equivalent \cite{Boxer_05}.
%If $A$ is a single point, then $H$ is called a \emph{contraction}
%and $X$ is  said to be \emph{contractible}.

For a continuous map  $f:X\to Y$  
between digital images $X$ and 
$Y$, the existence of a chain map
$f_\#^{c_1}:C_q^{c_1}(X)\to C_q^{c_1}(Y)$ was 
conjectured in 
\cite{Chris_Homotopy_relations}.
This conjecture was proven in low dimensions using computer enumerations, but we provide a complete analytical proof below.

In \cite{Jamil_DigCubSingHom} it was shown that $f_\#: C_q(X) \to C_q(Y)$ defined by $f_{\#q}(\sigma) = f\circ \sigma$ gives a chain map in the singular chain group. For the $c_1$-chain groups, we define $f^{c_1}_\#:C_q^{c_1}(X) \to C_q^{c_1}(Y)$ for some elementary cube $Q\subset X$ as follows: let $\sigma_Q:I^q \to Q$ be the affine embedding given by $J(t) = Mt + x$, where $x$ is the lexicographical minimum element of $Q$, and $M$ is the $q\times n$ matrix whose columns are the standard basis vectors $e_{k_i}$, where $1\le k_1<\dots<k_q\le n$ and $\{k_1,\dots,k_q\}$ is the set of coordinates in which the elementary intervals defining $Q$ are nondegenerate.
Then we define $f_\#^{c_1}(Q) = \beta_q(f \circ \sigma_Q)$. 

\begin{thm}\label{inducedchainmap}
The homomorphism $f^{c_1}_\#:C_q^{c_1}(X) \to C_q^{c_1}(Y)$ is a chain map.
\end{thm}
\begin{proof}
From the definition of $f^{c_1}_\#$, we have $\beta_q \circ f_{\#q} = f_{\#q}^{c_1}\circ \beta_q$ for any $q$. Also recall that in the singular chain groups, $f_\#$ defined above is a chain map, as is $\beta$. Let $Q$ be an elementary $q$-cube in $X$. Then we have:
\[ \begin{split}
\partial_q^{c_1}(f^{c_1}_{\#q}(Q)) &= \partial_q^{c_1}(\beta_q(f\circ \sigma_Q)) = \beta_{q-1}(\partial_q(f_{\#q}(\sigma_Q))) = \beta_{q-1}(f_{\#q-1}(\partial_q(\sigma_Q))) \\
&= f^{c_1}_{\#q-1}(\beta_{q-1}(\partial_q(\sigma_Q))) = f^{c_1}_{\#q-1}(\partial^{c_1}_q(\beta_q(\sigma_Q))) = f^{c_1}_{\#q-1}(\partial^{c_1}_q(Q))
\end{split}
\]
as desired.
\end{proof}

%For an elementary $q$-cube $Q\subset X$ and some continuous function $f:X\to Y$, if $f$ is injective on $Q$ then we can define an orientation $o^{f,Q}$ in the same was as we have defined $o^\sigma$ for a singular $q$-cube $\sigma$: we let $o^{f,Q}$ be the determinant of the affine linear map with which $f$ maps $Q$ onto $f(Q)$. Then for any elementary $q$-cube $Q\subset X$, define $f_q^{c_1}(Q)$ as follows:
%\[ f^{c_1}_q(Q) = \begin{cases} 0 & \text{ if $f$ is noninjective on $Q$,} \\
%o^{f,Q}f(Q) & \text{ if $f$ is injective on $Q$.} \end{cases} \]

By the above, we immediately obtain:

\begin{thm}
\label{cor_functoriality_c_1_homology}
$H^{c_1}_q$ is a functor on the category of $c_1$-digital images and $(c_1,c_1)$-continuous functions.
%Let $f:X\to Y$ be a $(c_1,c_1)$-continuous \todo{this isn't really stated the way we want. We just want to say $H^{c_1}_q$ is functorial.}
%function between digital images $(X,c_1)$ and 
%$(Y,c_1)$. Then 
%$f^{c_1}_q:C_q^{c_1}(X)\to C_q^{c_1}(Y)$ is a chain map, satisfying $(f\circ g)^{c_1}_q = f^{c_1}_q \circ g^{c_1}_q$ and $(1_X)_q^{c_1}$ is the identity homomorphism, where $1_X$ denotes the identity function on $X$.
\end{thm}

In \cite{Chris_Homotopy_relations}, Theorem \ref{inducedchainmap} was conjectured but not proven. Anticipating an eventual proof, Theorem 7.2 of \cite{Chris_Homotopy_relations} shows that if $f^{c_1}_\#$ is indeed a chain map, then homotopic maps induce the same homomorphisms in homology. 
Now that we have proven $f^{c_1}_\#:C_q^{c_1}(X) \to C_q^{c_1}(Y)$ to be a chain map for all $q$, the homotopy invariance of $H^{c_1}_q$ follows from Theorem 7.2 of \cite{Chris_Homotopy_relations}.
\begin{thm}
\label{cor_homotopy_inv_Hc1}
For digital image $X$ and $Y$ and continuous function $f,g:X\to Y$, if $f$ is homotopic to $g$, then the induced homomorphisms $f^{c_1}_*,g^{c_1}_*:H_q^{c_1}(X)\to H_q^{c_1}(Y)$ are equal. 

In particular, for homotopy equivalent digital images $X$ and $Y$,  we have $H^{c_1}_q(X)\cong H^{c_1}_q(Y)$.
\end{thm}
%\todo[color={red!100!green!33},inline]{I think in the above theorem, we can remove the first statement and the second statement is enough?}

\section{Noninjective cubes in discrete singular cubical homology of graphs}\label{sec_final}
The conjecture that $\beta$ induces an isomorphism would imply that noninjective cubes do not need to be considered when computing $dH_q(X)$ for a $c_1$-digital image $X$. It is natural to ask if the same would be true in general for the discrete singular cubical homology of any graph. 

From the previous section, $
\beta$ is a chain map, and it is clear that 
$\beta$ is surjective.
Denote the quotient chain complex 
$dC_q(X)/\ker\beta$ as
$\overline C_q(X)$, and its $q^{th}$-homology
 as $\overline H_q(X)$. 
Then $\beta$ induces an isomorphism 
$\overline H_q(X) \cong H_q^{c_1}(X)$ for all 
$q$.

We can view the homology groups $\bar H_q(X)$ as a sort of ``injective singular cubical homology'' for the digital image $X$, and these are easily computable because they are isomorphic to $H_q^{c_1}(X)$. 

In fact an injective singular cubical homology theory can be defined without using $\beta$ as follows: When $G$ is any graph (not necessarily a grid graph), we may still define what it means for two injective cubes $\sigma,\psi: I^q \to G$ to have the same or opposite orientations. (We may define $\sigma$ and $\psi$ to have the same orientation if and only if $\sigma \circ \psi^{-1}:I^q \to I^q$ has orientation $+1$.)

Now let $C_q(G)$ be the singular cubical chain complex of $G$, and let $\bar C_q(G)$ be the quotient obtained by identifying two injective cubes $\sigma$ and $\psi$ when they have the same image and same orientation, identifying $\sigma$ with $-\psi$ if they have the same image and different orientations, and identifying any noninjective cube to 0.

Computation of this homology group $\bar H_q(G)$ is far easier than for $dH_q(G)$, since it requires enumerating only the injective singular $q$-cubes, rather than all possible singular $q$-cubes. Conjecture \ref{conjecture} would imply that $dH_q(G) \cong \bar H_q(G)$ when $G$ is a grid graph. 

But $dH_q(G)$ is not in general isomorphic to $\bar H_q(G)$, as the following counterexample demonstrates. The authors would like to thank Georg Wille for bringing this example to our attention.
\begin{ex}
Let $G$ be the following graph: 
\[ 
\begin{tikzpicture}[vertex/.style = {draw, circle}] 
\node[vertex] (a) at (1,1) {$a$};
\node[vertex] (b) at (0,0) {$b$};
\node[vertex] (c) at (1,0) {$c$};
\node[vertex] (d) at (2,0) {$d$};
\node[vertex] (e) at (1,-1) {$e$};
\draw (a) -- (b) -- (e) -- (d) -- (a);
\draw (b) -- (c) -- (d);
\end{tikzpicture}
\]
It is routine to check that the loops given by $(abcd)$, $(bedc)$ and $(abed)$ give three injective $2$-cubes whose sum is a cycle in $\bar C_2(G)$. But this cycle is not a boundary of any chain in $\bar C_3(G)$, because there are no injective $3$-cubes in $G$. Thus $\bar H_2(G)$ is nontrivial. 

But $dH_2(G)$ is trivial, because $G$ is contractible. (The identity map on $G$ is homotopic to the map given by $(a,b,c,d,e) \mapsto (b,b,c,c,b)$, which is homotopic to a constant.) Thus $dH_2(G)$ is not isomorphic to $\bar H_2(G)$.
\end{ex}

The example above indicates that if indeed the induced  map $\beta:dH_q(X) \to H_q^{c_1}(X)$ is an isomorphism, then this must be due to the specific grid-like structure of $X$, since it will not be true for general graphs. 

We also see in the example above that the ``injective cubical homology groups'' $\bar H_q(X)$ are not preserved by homotopy equivalence. This indicates that, for general graphs, the homology theory $\bar H$ is not functorial. However, we have shown in Theorem \ref{cor_functoriality_c_1_homology} that in fact it is functorial for grid graphs. Apparently this is a special property of grid graphs, and it would be interesting to investigate if it holds for other categories of graphs.

\noindent\textbf{Acknowledgements}\\
This paper is based on work conducted while the first author was a graduate student at the Institute of Business Administration (IBA), Karachi, Pakistan. This work is also partially supported by the National Science Foundation under Grant No. DMS-1928930 and by the Alfred P. Sloan Foundation under Grant No. G-2021-16778, during the first author's residency at the Simons Laufer Mathematical Sciences Institute (formerly MSRI) in Berkeley, California.

The authors would like the thank Curtis Greene, Georg Wille, and Volkmar Welker for helpful comments on an earlier version.

\end{document}